\def\tred{\textcolor{red}}
\newcolumntype{L}{>{$}l<{$}}
\def\defeq{\coloneqq}
\def\eqdef{=:}
\title[CSB theorem for countable shuffles]{Cantor-Schr\"oder-Bernstein theorem for a class of countable linear orders\vspace{-6mm}}
\author[Srivastava and Mittal]{Suyash Srivastava and Mihir Mittal}
\address{Department of Mathematics and Statistics\\Indian Institute of Technology, Kanpur\\ Uttar Pradesh, India\\Corresponding author: Suyash Srivastava (ORCID ID: 0009-0001-0269-7361)}
\email{suyashsriv20@gmail.com, mihirmittal24@gmail.com}
\keywords{linear orders, convex embedding, shuffles, Cantor-Schr\"oder-Bernstein theorem}
\subjclass[2020]{06A05}
\begin{document}

\newtheorem{defn}{Definition}

\newtheorem{manualthm}{Theorem}
\setlength{\fboxsep}{1pt}
\setlength{\fboxrule}{1pt}
\newtheorem{innercustomthm}{Theorem}
\newenvironment{customthm}[1]
  {\renewcommand\theinnercustomthm{\fbox{#1}}\innercustomthm}
  {\endinnercustomthm}
  \newtheorem{innercustomconsthm}{Construction and Theorem}
\newenvironment{customconsthm}[1]
  {\renewcommand\theinnercustomconsthm{\fbox{#1}}\innercustomconsthm}
  {\endinnercustomconsthm}
  
\newtheorem{innercustomlem}{Lemma}
\newenvironment{customlem}[1]
  {\renewcommand\theinnercustomlem{\fbox{#1}}\innercustomlem}
  {\endinnercustomlem}
  
\newtheorem{innercustomcons}{Construction}
\newenvironment{customcons}[1]
  {\renewcommand\theinnercustomcons{\fbox{#1}}\innercustomcons}
  {\endinnercustomcons}
  \newtheorem{innercustomcor}{Corollary}
\newenvironment{customcor}[1]
  {\renewcommand\theinnercustomcor{\fbox{#1}}\innercustomcor}
  {\endinnercustomcor}
  
\newtheorem{innercustomdefn}{Definition}
\newenvironment{customdefn}[1]
  {\renewcommand\theinnercustomdefn{\fbox{#1}}\innercustomdefn}
  {\endinnercustomdefn}
  
\newtheorem{definitions}[defn]{Definitions}

\newtheorem{thm}{Theorem}
\newtheorem{lem}[thm]{Lemma}
\newtheorem{construction}[defn]{Construction}
\newtheorem{prop}[thm]{Proposition}
\newtheorem*{prop*}{Proposition}
\newtheorem{cor}[thm]{Corollary}
\newtheorem{conj}[thm]{Conjecture}

\newtheorem{ques}[thm]{Question}
\newtheorem{claim}[thm]{Claim}
\newtheorem*{claim*}{Claim}
\newtheorem{algo}[defn]{Algorithm}
\theoremstyle{remark}
\newtheorem{rem}[defn]{Remark}
\theoremstyle{remark}
\newtheorem{remarks}[defn]{Remarks}
\theoremstyle{remark}
\newtheorem{notation}[defn]{Notation}
\theoremstyle{remark}
\newtheorem{exmp}[defn]{Example}
\theoremstyle{remark}
\newtheorem{examples}[defn]{Examples}
\theoremstyle{remark}
\newtheorem{dgram}[defn]{Diagram}
\theoremstyle{remark}
\newtheorem{fact}[defn]{Fact}
\theoremstyle{remark}
\newtheorem{illust}[defn]{Illustration}
\theoremstyle{remark}
\newtheorem{que}[defn]{Question}
\numberwithin{equation}{section}
\newtheorem{example}[defn]{Example}
\newtheorem{exercise}[defn]{Exercise}

\def\abs#1{{\lvert#1\rvert}}
\def\MSB{MSB }
\def\pivot{\mathsf{pivot}}
\def\tred{\textcolor{red}}
\def\dim{\mathsf{dim}}
\def\MS{\mathsf{MS}}
\def\res{\upharpoonright}
\def\tensor{\bigotimes}
\def\defeq{\vcentcolon=}
\def\meet{\wedge}
\def\join{\vee}
\def\dLOfpb#1#2{\mathrm{dLO}_\mathrm{fp}^{{#1}{#2}}}
\def\corner{F}
\def\less{\prec}
\def\frame#1{\begin{mdframed}#1\end{mdframed}}
\def\hdammock{$\Tilde{h}$ammock }
\def\bb{\mathfrak{b}}
\renewcommand{\labelitemii}{$ \circ $}
\def\sd{sd}
\def\eqdef{=\vcentcolon}
\def\upset{\uparrow}
\def\downset{\downarrow}
\def\length{\mathsf{length}}
\def\gray{\textcolor{gray}}
\def\teal{\textcolor{teal}}
\def\lime{\textcolor{lime}}
\def\magenta{\textcolor{magenta}}
\def\orange{\textcolor{orange}}
\newcommand{\Il}{\pi_l(\ii)}
\def\tblue{\textcolor{blue}}
\newcommand{\Ir}{\pi_r(\ii)}
\newcommand{\htIl}{\htpi_l(\ii)}
\newcommand{\htIr}{\htpi_r(\ii)}
\newcommand{\STT}{\mathsf{long}}
\newcommand{\smol}{\mathsf{short}}
\newcommand{\beeg}{\mathsf{long}}
\newcommand{\eqvl}{\equiv_H^l}
\newcommand{\eqvr}{\equiv_H^r}
\newcommand{\eqvlr}{\equiv_H^{lr}}
\newcommand{\eqv}{\equiv_{H}}

\newcommand{\Lo}{\mathsf{L^0}}
\newcommand{\Lp}{\mathsf{L^+}}
\newcommand{\Lm}{\mathsf{L^-}}
\newcommand{\sfL}{\mathsf{L}}
\newcommand{\Ro}{\mathsf{R^0}}
\newcommand{\Rp}{\mathsf{R^+}}
\newcommand{\Rm}{\mathsf{R^-}}
\newcommand{\sfR}{\mathsf{R}}

\newcommand{\DL}{\Delta_\sfL}
\newcommand{\DR}{\Delta_\sfR}

\newcommand{\hb}{\widebar{H}}
\newcommand{\hht}{\widehat{H}}
\newcommand{\hd}{\widetilde H}
\newcommand{\Hb}{\overline{\mathcal H}}
\newcommand{\Hht}{\widehat{\mathcal H}}
\newcommand{\Hd}{\widetilde{\mathcal H}}
\newcommand{\infb}{\,^\infty\bb}
\newcommand{\binf}{\,\bb^\infty}
\newcommand{\qb}{\mathsf Q^\mathsf {Ba}}
\newcommand{\suc}{\mathfrak{succ}}
\newcommand{\pred}{\mathfrak{pred}}
\newcommand\A{\mathcal{A}}
\newcommand\B{\mathsf{B}}
\newcommand\BB{\mathcal{B}}
\newcommand\C{\mathcal{C}}
\newcommand\Pp{\mathcal{P}}
\newcommand\D{\mathcal{D}}
\newcommand\Hamm{\hat{H}}
\newcommand\hh{\mathfrak{h}}
\newcommand\HH{\mathcal{H}}
\newcommand\RR{\mathcal{R}}
\newcommand\Red[1]{\mathrm{R}_{#1}}
\newcommand\HRed[1]{\mathrm{HR}_{#1}}
\newcommand\K{\mathcal{K}}
\newcommand\LL{\mathcal{L}}
\newcommand\Lim{\text{{\normalfont Lim}}}
\newcommand\M{\mathcal{M}}
\newcommand\SD{\mathcal{SD}}
\newcommand\MD{\mathcal{MD}}
\newcommand\SMD{\mathcal{SMD}}
\newcommand\T{\mathcal{T}}
\newcommand\TT{\mathfrak T}
\newcommand\ii{\mathcal I}
\newcommand\UU{\mathcal{U}}
\newcommand\VV{\mathcal{V}}
\newcommand\ZZ{\mathcal{Z}}

\newcommand\Q{\mathbb{Q}}
\newcommand{\N}{\mathbb{N}} 
\newcommand{\R}{\mathbb{R}}
\newcommand{\Z}{\mathbb{Z}}
\newcommand{\qq}{\mathfrak q}
\newcommand{\ch}{\circ_H}
\newcommand{\cg}{\circ_G}
\newcommand{\bua}[1]{\mathfrak b^{\alpha}(#1)}
\newcommand{\falpha}{{\mathfrak{f}\alpha}}
\newcommand{\fgamma}{\gamma^{\mathfrak f}}
\newcommand{\fbeta}{{\mathfrak{f}\beta}}
\newcommand{\bub}[1]{\mathfrak b^{\beta}(#1)}
\newcommand{\bla}[1]{\mathfrak b_{\alpha}(#1)}
\newcommand{\blb}[1]{\mathfrak b_{\beta}(#1)}
\newcommand{\lmin}{\lambda^{\mathrm{min}}}
\newcommand{\lmax}{\lambda^{\mathrm{max}}}
\newcommand{\xmin}{\xi^{\mathrm{min}}}
\newcommand{\xmax}{\xi^{\mathrm{max}}}
\newcommand{\lbmin}{\bar\lambda^{\mathrm{min}}}
\newcommand{\lbmax}{\bar\lambda^{\mathrm{max}}}
\newcommand{\ff}{\mathfrak f}
\newcommand{\cc}{\mathfrak c}
\newcommand{\dd}{\mathfrak d}
\newcommand{\sqsf}{\sqsubset^\ff}
\newcommand{\rr}{\mathfrak r}
\newcommand{\pp}{\mathfrak p}
\newcommand{\uu}{\mathfrak u}
\newcommand{\vv}{\mathfrak v}
\newcommand{\ww}{\mathfrak w}
\newcommand{\xx}{\mathfrak x}
\newcommand{\yy}{\mathfrak y}
\newcommand{\zz}{\mathfrak z}
\newcommand{\MM}{\mathfrak M}
\newcommand{\mm}{\mathfrak m}
\newcommand{\sbq}{\mathfrak s}
\newcommand{\tbq}{\mathfrak t}
\newcommand{\Spec}{\mathbf{Spec}}
\newcommand{\Br}{\mathbf{Br}}
\newcommand{\sk}[1]{\{#1\}}
\newcommand{\Prime}{\mathbf{Pr}}
\newcommand{\Parent}{\mathbf{Parent}}
\newcommand{\Uncle}{\mathbf{Uncle}}
\newcommand{\Cousin}{\mathbf{Cousin}}
\def\sgn{\mathrm{sgn}}

\newcommand{\Nephew}{\mathbf{Nephew}}
\newcommand{\Sibling}{\mathbf{Sibling}}
\newcommand{\uc}{\mathrm{uc}}
\newcommand{\MCP}{\mathrm{MCP}}
\newcommand{\MSCP}{\mathrm{MSCP}}
\newcommand{\TTT}{\widetilde{\T}}
\newcommand{\la}{l}
\newcommand{\ra}{r}
\newcommand{\lb}{\bar{l}}
\newcommand{\rb}{\bar{r}}
\newcommand{\tBa}{\varepsilon^{\mathrm{Ba}}}
\def\ii{\mathcal{I}}
\newcommand{\brac}[2]{\langle #1,#2\rangle}
\newcommand{\braket}[3]{\langle #1\mid #2:#3\rangle}
\newcommand{\fin}{fin}
\newcommand{\inff}{inf}
\newcommand{\Zg}{\mathrm{Zg}(\Lambda)}
\newcommand{\Zgs}{\mathrm{Zg_{str}}(\Lambda)}
\newcommand{\STR}[1]{\mathrm{Str}(#1)}
\newcommand{\dmod}{\mbox{-}\operatorname{mod}}
\newcommand{\HQ}{\mathcal{HQ}^\mathrm{Ba}}
\newcommand{\bHQ}{\overline{\mathcal{HQ}}^\mathrm{Ba}}
\newcommand\Af{\mathcal{A}^{\ff}}
\newcommand\AAf{\bar{\mathcal{A}}^{\ff}}
\newcommand\Hf{\mathcal{H}^{\ff}}
\newcommand\Rf{\mathcal{R}^{\ff}}
\newcommand\Tf{\T^{\ff}}
\newcommand\Uf{\mathcal{U}^{\ff}}
\newcommand\Sf{\mathcal{S}^{\ff}}
\newcommand\Vf{\mathcal{V}^{\ff}}
\newcommand\Zf{\mathcal{Z}^{\ff}}
\newcommand\bVf{\overline{\mathcal{V}}^{\ff}}
\newcommand\bTf{\overline{\mathcal{T}}^{\ff}}
\newcommand{\fmin}{\xi^{\mathrm{fmin}}}
\newcommand{\fmax}{\xi^{\mathrm{fmax}}}
\newcommand{\xif}{\xi^\ff}
\newcommand{\mycomment}[1]{}
\newcommand{\LOfp}{\mathrm{LO}_{\mathrm{fp}}}

\newcommand\Tl{\mathbf{T}}
\newcommand\Tla{\mathbf{T}_{\la}}
\newcommand\Tlb{\mathbf{T}_{\lb}}
\newcommand\Ml{\mathbf{M}}
\newcommand\Mla{\mathbf{M}_{\la}}
\newcommand\Mlb{\mathbf{M}_{\lb}}
\newcommand\OT{\mathcal{O}}
\newcommand\LO{\mathbf{LO}}

\def\tblue{\textcolor{blue}}

\newcommand\rad{\mathrm{rad}_\Lambda} 

\newcommand{\fork}[1]{\mathrm{Str}_{\text{Fork}}^{\la}(#1)}
\newcommand{\BaB}{\mathsf{Ba(B)}}
\newcommand{\CycB}{\mathsf{Cyc(B)}}
\newcommand{\ExtB}{\mathsf{Ext(B)}}
\newcommand{\St}{\mathsf{St}}
\def\short{\mathsf{short}}
\newcommand{\StB}[1]{\mathsf{St}_{#1}(\sB)}
\newcommand{\STB}[1]{\mathsf{St}_{#1}(\xx_0,i;\sB)}
\newcommand{\BST}[1]{\mathsf{BSt}_{#1}(\xx_0,i;\sB)}
\newcommand{\CSt}[1]{\mathsf{CSt}_{#1}(\sB)}
\newcommand{\CST}[1]{\mathsf{CSt}_{#1}(\xx_0,i;\sB)}
\newcommand{\ASt}[1]{\mathsf{ASt}_{#1}(\sB)}

\newcommand{\OSt}[1]{\overline{\mathsf{St}}_{#1}(\sB)}
\newcommand{\OST}[1]{\overline{\mathsf{St}}_{#1}(\xx_0,i;\sB)}
\newcommand{\lB}{\ell_{\sB}}
\newcommand{\lbB}{\overline{\ell}_{\sB}}
\newcommand{\LB}{l_{\sB}}
\newcommand{\LbB}{\overline{l}_{\sB}}

\newcommand{\BalB}{\mathsf{Ba}_l(\sB)}
\newcommand{\BalbB}{\mathsf{Ba}_{\lb}(\sB)}
\newcommand{\BarB}{\mathsf{Ba}_r(\sB)}
\newcommand{\BarbB}{\mathsf{Ba}_{\bar{r}}(\sB)}

\newcommand{\sB}{\mathsf{B}}
\newcommand{\Str}[1]{\mathrm{Str}_{#1}(\xx_0,i;\sB)}
\newcommand{\Strd}[1]{\mathrm{Str}'_{#1}(\xx_0,i;\sB)}
\newcommand{\Strdd}[1]{\mathrm{Str}''_{#1}(\xx_0,i;\sB)}
\newcommand{\Cent}{\mathrm{Cent}(\xx_0,i;\sB)}
\newcommand{\Start}{\mathrm{Start}(\xx_0,i;\sB)}
\newcommand{\End}{\mathrm{End}(\xx_0,i;\sB)}
\newcommand{\braclB}{\brac{1}{\lB}}
\newcommand{\braclbB}{\brac{1}{\lbB}}
\newcommand{\QBa}{\Q^{\mathrm{Ba}}}
\newcommand{\Ba}{\mathcal Q_0^\mathrm{Ba}}
\newcommand{\Cyc}{\mathsf{Cyc}(\Lambda)}
\newcommand{\lazy}{1_{(v, i)}}
\newcommand{\dLOfd}{\mathrm{dLO_{fd}}}
\newcommand{\dLOfdb}[2]{\mathrm{dLO}_{\mathrm{fd}}^{{#1}{#2}}}

\newcommand{\LOfd}{\mathrm{LO}_{\mathrm{fd}}}
\newcommand{\Balr}{((\BalB\cap \BarB) \cup (\BalbB \cap \BarbB))}
\newcommand{\Hbar}{\overline{H}_l^i(\xx_0)}
\newcommand{\Hhat}{\widehat{H}_l^i(\xx_0)}
\newcommand{\HOST}[1]{\widehat{\overline{\mathsf{St}}}_{#1}(\xx_0,i;\sB)}
\newcommand{\HHlix}{\widehat{H}_l^i(\xx_0)}

\def\lan{\langle}
\def\ran{\rangle}
\newcommand{\s}[1]{{\mathsf{#1}}}
\newcommand{\f}[1]{\mathfrak {#1}}
\newcommand{\ol}[1]{\overline{#1}}

\newcommand{\gst}{\s{St}}
\newcommand{\stbar}{\ol\gst}
\mathchardef\mh="2D 
\newcommand{\allst}{(\le\omega)\mh\s{St}(\s B)}
\def\da{\Downarrow}
\def\ua{\Uparrow}
\def\dotplus{\cdot+}
\def\rad{\text{rad}}
\def\comp{\circ}
\def\aa{\mathfrak a}
\def\to{\rightarrow}
\def\HR{\textsf{HR}}
\def\parens#1{{(#1)}}
\def\mbf{\mathbf}
\newcommand{\rank}{\mathsf{rank}}
\newcommand{\mfa}{\mathfrak{a}}
\def\lin{\mathbf{L}}
\def\two{\mathbbm{2}}

\makeatletter
\let\save@mathaccent\mathaccent
\newcommand*\if@single[3]{%
  \setbox0\hbox{${\mathaccent"0362{#1}}^H$}%
  \setbox2\hbox{${\mathaccent"0362{\kern0pt#1}}^H$}%
  \ifdim\ht0=\ht2 #3\else #2\fi
  }
\newcommand*\rel@kern[1]{\kern#1\dimexpr\macc@kerna}
\newcommand*\widebar[1]{\@ifnextchar^{{\wide@bar{#1}{0}}}{\wide@bar{#1}{1}}}
\newcommand*\wide@bar[2]{\if@single{#1}{\wide@bar@{#1}{#2}{1}}{\wide@bar@{#1}{#2}{2}}}
\newcommand*\wide@bar@[3]{%
  \begingroup
  \def\mathaccent##1##2{%
    \let\mathaccent\save@mathaccent
    \if#32 \let\macc@nucleus\first@char \fi
    \setbox\z@\hbox{$\macc@style{\macc@nucleus}_{}$}%
    \setbox\tw@\hbox{$\macc@style{\macc@nucleus}{}_{}$}%
    \dimen@\wd\tw@
    \advance\dimen@-\wd\z@
    \divide\dimen@ 3
    \@tempdima\wd\tw@
    \advance\@tempdima-\scriptspace
    \divide\@tempdima 10
    \advance\dimen@-\@tempdima
    \ifdim\dimen@>\z@ \dimen@0pt\fi
    \rel@kern{0.6}\kern-\dimen@
    \if#31
      \overline{\rel@kern{-0.6}\kern\dimen@\macc@nucleus\rel@kern{0.4}\kern\dimen@}%
      \advance\dimen@0.4\dimexpr\macc@kerna
      \let\final@kern#2%
      \ifdim\dimen@<\z@ \let\final@kern1\fi
      \if\final@kern1 \kern-\dimen@\fi
    \else
      \overline{\rel@kern{-0.6}\kern\dimen@#1}%
    \fi
  }%
  \macc@depth\@ne
  \let\math@bgroup\@empty \let\math@egroup\macc@set@skewchar
  \mathsurround\z@ \frozen@everymath{\mathgroup\macc@group\relax}%
  \macc@set@skewchar\relax
  \let\mathaccentV\macc@nested@a
  \if#31
    \macc@nested@a\relax111{#1}%
  \else
    \def\gobble@till@marker##1\endmarker{}%
    \futurelet\first@char\gobble@till@marker#1\endmarker
    \ifcat\noexpand\first@char A\else
      \def\first@char{}%
    \fi
    \macc@nested@a\relax111{\first@char}%
  \fi
  \endgroup
}
\makeatother
\newcommand\test[1]{%
$#1{M}$ $#1{A}$ $#1{g}$ $#1{\beta}$ $#1{\mathcal A}^q$
$#1{AB}^\sigma$ $#1{H}^C$ $#1{\sin z}$ $#1{W}_n$}

\def\eps{\varepsilon}

\def\beps{\bm{\eps}}
\def\inorder{<_\two}
\def\emptyseq{\lan\ran}
\def\LOfa{\mathrm{LO_{fa}}}

\def\R{\mathbb{R}}
\def\diam{\diamondsuit}
\def\club{\clubsuit}
\def\P{\mathcal{P}}
\def\res{\upharpoonright}
\def\name{\mathring}
\def\A{\mathcal{A}}
\def\dom{\text{dom}}
\def\K{\mathcal{K}}
\def\Pbar{\overline{P}}
\def\bf{\mathbf}
\def\lessp{\leq^{pr}_}
\def\lessap{\leq^{apr}_}
\def\lessstar{\leq_{\star}}
\def\I{\bf{I}}

\def\vsk{\vspace{6pt}}
\def\cal{\mathcal}
\def\frak{\mathfrak}
\def\e{\varepsilon}
\def\club{\clubsuit}
\def\diam{\diamondsuit}
\def\Cohen{\textsf{Cohen}}
\def\Random{\textsf{Random}}
\def\cf{\textsf{cf}}
\def\dom{\mathsf{dom}}
\def\rng{\mathsf{range}}
\def\supp{\textsf{supp}}
\def\odim{\textsf{odim}}
\def\cont{\frak{c}}
\def\lt{\textsf{left}}
\def\rt{\textsf{right}}
\def\up{\textsf{up}}
\def\down{\textsf{down}}
\def\otp{\textsf{otp}}
\def\rk{\textrm{rk}}
\def\opt{\textsf{otp}}
\def\add{\textsf{add}}
\def\split{\textsf{Split}}
\def\Meager{\textsf{Meager}}
\def\Null{\textsf{Null}}
\mathchardef\mhyphen="2D
\def\blank{\mhyphen}
\def\deltahat{\hat{\delta}}
\def\call{\mathcal}
\def\red{\textcolor{red}}
\def\blue{\textcolor{blue}}
\def\LOfa{\mathrm{LO}_\mathrm{fa}}
\def\M{{M_{1b}}}
\def\m{{M_{1a}}}
\def\preorder{\text{$<_\textit{pre} $}}
\def\front{\mathrm{front}}
\def\Front{\mathbf{Front}}
\def\plus{{+}}
\def\minus{{-}}
\def\layer{\textsf{layer}}

\begin{abstract}
   The shuffle of a non-empty countable  set $ S $ of linear orders is the (unique up to isomorphism)  linear order $ \Xi(S) $ obtained by fixing a coloring function $ \chi: \mathbb{Q} \to S $ having  fibers dense in $ \mathbb{Q} $ and replacing each rational $ q $ in $ (\mathbb{Q}, <) $ with an isomorphic copy of $ \chi(q) $. We prove that any two countable shuffles that embed as convex subsets into each other are order isomorphic. 
\end{abstract}
\maketitle

\vspace{-6mm}
\renewcommand{\phi}{\varphi}
\def\Lin{\mathbf{Lin}}
\def\C{\mathbf{C}}
\def\Obj{\mathrm{Obj}}
\def\Sets{\mathbf{Sets}}

\section{Introduction}
\label{S: Introduction}
\def\ZFC{\textsf{ZFC}}
\def\ZF{\textsf{ZF}}
\def\AC{\textsf{AC}}
The set-theoretic Cantor-Schr\"oder-Bernstein theorem (henceforth the CSB theorem) guarantees a bijection between sets $ A $ and $ B $  whenever there exist injections $ f_A : A \to B $ and $ f_B : B \to A $. Analogues of this theorem are known to hold in many settings, including those of $ \sigma $-complete Boolean algebras \cite{Sikorski1948, tarskicardinal},  
 computable sets (the Myhill isomorphism theorem \cite[Theorem~18]{myhill1955creative}), 
 measurable spaces \cite[\S~3.3, Proposition~3.3.6]{srivastava2008course}, 
 and 
 $ \infty $-groupoids \cite{escardo} 
 under appropriate maps. Often such results require an appropriate strengthening of the notion of an injection. 

It is well known that the CSB theorem does not hold for linear orders with monotone maps. This is true even when we consider, more specifically, just the embeddings with convex images (Definition~\ref{defn: convexity}): there exist  pairs of linear orders, such as $ (\Q, <) $ and $ ([0, 1] \cap \Q, <) $, that are \emph{convex-biembeddable} but not isomorphic. Therefore, we restrict our attention to the class of countable \emph{shuffles}.

The shuffle operator $ \Xi $, first introduced by Skolem in \cite{Skolem1970-SKOLUB}, produces, for any given countable set $ S $ of linear orders (the \emph{shufflands}), a single linear order $ \Xi(S) $ by replacing each element of $ \Q $ by one of the shufflands in a homogeneous manner (Definition~\ref{defn: shuffle}). 

By closing the set of suborders of $ (\N, <) $ under order isomorphisms,  and the basic operations of order reversal, binary addition, lexicographic multiplication and finitary shuffles, Lauchli and Leonard  \cite{leonard1968elementary} obtained a class $ \mathcal{M} $ of linear orders having computable $ n $-types. In the same paper, they used this class to prove the decidability of the first order theory of (colored) linear orders through Ehrenfeucht-Fr\"aiss\'e games. The first author encountered the orders in $ \mathcal{M} $ under the name of \emph{finite description linear orders} while computing an ordinal-valued Morita-invariant for certain finite-dimensional associative algebras known as special biserial algebras \cite{SSK}.  
Together with Kuber \cite{SK}, he obtained an alternate proof using automata of the fact that certain linear orders key to this computation lie in the class $ \mathcal{M} $.
The second author's interest in linear orders stems from a previous work \cite{mittal2024exponentiablelinearordersneed} on another operation on the class of linear orders, namely exponentiation. 

 
The following theorem is the main result of this paper.

\begin{thm}
\label{thm: main theorem}
    Let $ L_\plus, L_\minus $ be convex-biembeddable countable linear orders and suppose that there exist countable sets $ S^0_\plus, S^0_\minus $ of linear orders such that $ L_\plus \cong \Xi(S^0_\plus) $ and $ L_\minus \cong \Xi(S^0_\minus) $. Then $ L_\plus \cong L_\minus $.
\end{thm}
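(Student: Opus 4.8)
The plan is to analyze the combinatorial data underlying a countable shuffle and show that convex-biembeddability forces this data to coincide. First I would normalize the shufflands: given $L_\plus \cong \Xi(S^0_\plus)$, I would want to pass from $S^0_\plus$ to a canonical finite or countable set $S_\plus$ of linear orders such that $\Xi(S_\plus) \cong \Xi(S^0_\plus)$ and $S_\plus$ is \emph{reduced} in an appropriate sense — e.g. no shuffland is itself a shuffle of others already present, each shuffland appears "essentially uniquely", and every shuffland genuinely occurs as a maximal interval of the specified isomorphism type. The key structural observation is that in $\Xi(S)$ the shufflands reappear as the maximal convex subsets of certain isomorphism types, so $S$ (once reduced) is an invariant of the order $\Xi(S)$ up to isomorphism. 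This reduces the theorem to: if $\Xi(S_\plus)$ and $\Xi(S_\minus)$ are convex-biembeddable, then the reduced sets $S_\plus$ and $S_\minus$ give isomorphic shuffles, and for this it suffices to understand how a convex embedding $\Xi(S_\minus) \hookrightarrow \Xi(S_\plus)$ interacts with the $\mathbb{Q}$-indexed block structure.

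Next I would set up the two convex embeddings $f : L_\plus \to L_\minus$ and $g : L_\minus \to L_\plus$ and examine their images. A convex subset of $\Xi(S)$ is, up to a bounded discrepancy at its two ends, again a shuffle $\Xi(S')$ for some $S' \subseteq$ (convex suborders of members of $S$), because a convex piece of the dense $\mathbb{Q}$-pattern either is a single block (a convex suborder of one shuffland) or contains an interval of $\mathbb{Q}$ and hence is itself a shuffle of the shufflands occurring densely in that interval, possibly with two extra "boundary" blocks glued on. So from $g \circ f : L_\plus \to L_\plus$ convex, I would extract that $L_\plus$ embeds convexly into a "central" sub-shuffle of itself with the same shuffland set, and then argue — this is the analogue of the idempotent/back-and-forth trick in the classical CSB proof — that a countable shuffle which embeds convexly into a shuffle built from the same (reduced) family of shufflands, with dense occurrences, is already isomorphic to it. The homogeneity of $\mathbb{Q}$ (it embeds convexly into any of its nonempty open intervals and conversely) is what makes the boundary blocks absorbable.

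The main obstacle I anticipate is the bookkeeping at the \emph{endpoints} and the possible presence of shufflands with greatest or least elements, or of a shuffland that is a single point: a convex image need not be "open", so the two end blocks of each convex image must be tracked carefully, and a reduced shuffland set may legitimately contain both $L$ and $L$-with-endpoints-adjoined as distinct convex suborders. Handling this cleanly will require a precise normal form for countable shuffles — presumably splitting each $S$ into its "interior" part and a finite amount of endpoint data — and then a case analysis showing the endpoint data is also a biembeddability invariant. The other delicate point is ensuring the reduction step (replacing $S^0_\pm$ by $S_\pm$) is well-defined: I would need a lemma that two countable sets of linear orders yield isomorphic shuffles iff they have the same "closure" under the relevant convex-suborder relation, so that "the reduced shuffland set" is genuinely canonical. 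Granting such a normal-form lemma and the homogeneity-based absorption lemma, the CSB conclusion $L_\plus \cong L_\minus$ follows by a symmetric back-and-forth between the two reduced shuffland sets together with a Cantor-style construction of the isomorphism $\Xi(S_\plus) \to \Xi(S_\minus)$.
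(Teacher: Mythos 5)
Your strategy---extract a canonical ``reduced'' shuffland set from the isomorphism type of each $L_i$, then show convex-biembeddability forces the two reduced sets to agree---rests on two lemmas that are not established and that, in the form you state them, do not hold. First, the block structure of a shuffle is not order-theoretically definable, so the shufflands do not ``reappear as the maximal convex subsets of certain isomorphism types'': for instance $\Xi(\{\Q\}) \cong \Q \cong \Xi(\{\mathbf 1\})$, and Proposition~\ref{prop: property_shuffle} shows more generally that quite different shuffland sets (e.g.\ $S$ versus $S \cup \{t_1 + \Xi(S) + t_2\}$) present the same order. Defining a reduction operation on arbitrary countable sets of arbitrary countable linear orders, and proving that $\Xi(S) \cong \Xi(T)$ if and only if the reductions agree, amounts to classifying countable shuffles up to isomorphism, which is at least as hard as the theorem itself. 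The paper deliberately avoids any such classification: it never decides when two shuffles are isomorphic in general, but instead \emph{constructs} a common presentation of $L_\plus$ and $L_\minus$ as shuffles over color sets related by an isomorphism-type-preserving bijection (Corollary~\ref{cor: with Aiw} together with Remark~\ref{rem: colour set bijection}).

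Second, the step ``from $g \circ f$ convex I would extract that $L_\plus$ embeds convexly into a central sub-shuffle of itself with the same shuffland set'' fails. A convex subset of $\Xi(S)$ may lie entirely inside a single block, i.e.\ inside a single shuffland, which is just some countable linear order carrying no shuffle structure a priori: take $S_\minus = \{M\}$ with $M$ containing a convex copy of $L_\plus$, so that $f(L_\plus)$, and hence $g(f(L_\plus))$, sits inside one block. Thus one application of the embeddings need not return you to a shuffle over the same shufflands, and no finite number of applications need do so. This is exactly why the paper iterates the embeddings $\omega$ times, stratifying $L_i$ into levels $A_{i,n}$ with residue $A_{i,\omega} = \bigcap_{n} A_{i,n}$, encodes the iteration in the trees $T_i$, identifies $L_i \setminus A_{i,\omega}$ with the $i$-independent order $\Xi(F_\plus \cup F_\minus)$ via the frontier, and then devotes Subsection~\ref{subsection: final argument} to controlling the residue $A_{i,\omega}$ interval by interval along the branches, using the tail-equivalence classes of $R^\omega$. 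Your sketch gestures at ``the back-and-forth trick'' of the classical CSB proof, but it contains no mechanism for analyzing this residual set, and that is where the real work of the proof lies.
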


For the remainder of the paper, we fix $ L_i, S^0_i $ for $ i \in \{\plus, \minus\} $ as in the hypothesis of Theorem~\ref{thm: main theorem}. 

We take inspiration from
Dedekind's proof of the set-theoretic CSB theorem within $ \ZF $ (Zermelo–Fraenkel set theory without the Axiom of Choice), which essentially involved constructing $ \lan (A_n, B_n) \mid n \in \omega \ran $ with $ A_0 := A, B_0 := B, A_{n + 1} := f_B(B_n) $ and $ B_{n + 1} := f_A(A_n) $. Observing that $ A_n \supseteq A_{n + 1} $ for each $ n $ and letting $ A_{\omega} := \bigcap_{n \in \omega} A_n $ yields a useful partition $ A = \bigsqcup_{n \in \omega} (A_n \setminus A_{n + 1}) \sqcup A_{\omega} $ (and likewise for $ B $). Similarly, we use the given convex embeddings $f_i$ from $L_i$ to  $L_{-i}$ to construct multifunctions $g_i$ from $L_i$ to $L_{-i}$. We stratify $ L_i $ into $ \omega + 1 $ descending levels $ \lan A_{i, n} \mid n \in \omega + 1 \ran $ for each $ i \in \{\plus, \minus\} $ using $A_{i,n+1} = g_{-i}(A_{-i,n})$ and $ A_{i, \omega} = \bigcap_{n \in \omega} A_{i, \omega} $.





We then construct, for each $ i \in \{\plus, \minus\} $ a colored tree $ T'_i $ whose leaf nodes index a partition of $ L_i \setminus A_{i, \omega} $ into intervals
and whose infinite branches index a partition of $ A_{i, \omega} $ into intervals. 
Studying the colored linear orders $ \Front(T_i') $, $ \ol\Front(T_i') $ formed by the leaf nodes and branches of $ T_i' $ (Definition~\ref{defn: preliminary tree}) yields expressions for the order isomorphism types of $ L_i \setminus A_{i, \omega} $ and $ L_i $ respectively that do not depend on $ i $, thereby completing our proof. 

The rest of the paper is organized as follows. We recall standard results and establish our notation for linear orders in Section~\ref{S: Lin Preliminaries} and for trees in Section~\ref{S: Tree Preliminaries}. In Section~\ref{S: Constructing}, we construct a family of embeddings that we then use to partition $ L_i $. We then construct colored trees $ T_i $ and $ T_i' $ mimicking these embeddings, and use their nodes and branches  to refine our partition.
In Section~\ref{S: Proving}, we prove isomorphisms between the frontiers of $ T_i, T_i' $ and (suborders of) $ L_i, L_{-i} $ to complete the proof.

{We use the notation $ \sqcup $ to denote disjoint unions {and power set by $\mathcal{P}$.}

\subsection*{Acknowledgements}
The authors are grateful to Dr.~Amit Kuber for his careful reading of the manuscript and his valuable suggestions to improve the writing.



\section{Preliminaries on linear orders}
\label{S: Lin Preliminaries}

\begin{defn}

    \label{defn: convexity}
A \emph{linear order} $ (L, <) $, or simply $ L $ for short, is a set $ L $ endowed with a binary relation $ < $ that is {irreflexive, transitive and total (any two distinct elements are comparable)}. 
Morphisms between linear orders are monotone functions, and bijective morphisms are isomorphisms.

 Given $ A, B \subseteq L $, we say that $ A \ll B $ if $ a < b $ whenever $ a \in A, b \in B $. We say that $ A \subseteq L $ is \emph{convex} if whenever $ a \in L \setminus A $, then either $ \{a\} \ll A $ or $ A \ll \{a\} $.

 A monotone map between linear orders is said to be a \emph{convex embedding} if its image is convex.
Two linear orders are said to be \emph{convex-biembeddable} if there is a convex embedding from either one to another.

\end{defn}


\begin{example}
    For each $ n \in \N $, $ (\{0, 1, \dots, n - 1\}, <) $, where $ < $ is the usual ordering of the reals, is a linear order denoted $ \mbf n $. Similarly, $ (\Q, <) $ is the linear order of the rationals and $ (\ol{\Q}, <) $ that of the \emph{closed rationals} $ \{-\infty\} \sqcup \Q \sqcup \{\infty\} $, with $ -\infty, \infty $ being the least and the greatest elements in $(\ol{\Q},<)$ respectively.
\end{example}


The following theorem is due to Cantor. 
\begin{thm}\label{thm:Cantor}
    Every countable linear order is embeddable into rationals.
\end{thm}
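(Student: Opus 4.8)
The plan is to carry out the classical ``forth'' half of a back-and-forth argument, exploiting that $(\Q,<)$ is dense and has neither a least nor a greatest element. If $L$ is finite, say $|L| = n$, then $L \cong \mbf n$, which embeds into $\Q$ via $i \mapsto i$, so I may assume $L$ is countably infinite and fix an enumeration $L = \{a_k \mid k \in \omega\}$. To stay within $\mathsf{ZF}$ I would also fix an enumeration $\Q = \{q_k \mid k \in \omega\}$, so that every nonempty set of rationals has a canonical ``first'' element, the one of least index.

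Next I would define a strictly monotone map $f \colon L \to \Q$ by recursion on $k$, maintaining the invariant that $f \res \{a_0, \dots, a_{k-1}\}$ is order-preserving. Set $f(a_0) := q_0$. Given $f$ on $\{a_0, \dots, a_{k-1}\}$ with $k \geq 1$, put $I^- := \{i < k \mid a_i < a_k\}$ and $I^+ := \{i < k \mid a_k < a_i\}$; totality of $<$ gives $\{0, \dots, k-1\} = I^- \sqcup I^+$, and the invariant gives $\{f(a_i) \mid i \in I^-\} \ll \{f(a_i) \mid i \in I^+\}$. Since these are finite sets, the collection $J$ of rationals lying strictly above every $f(a_i)$ with $i \in I^-$ and strictly below every $f(a_i)$ with $i \in I^+$ is nonempty --- by density of $\Q$ when both $I^\pm$ are nonempty, and by the absence of endpoints when exactly one of them is empty. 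Define $f(a_k)$ to be the element of $J$ of least index; the invariant clearly persists.

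Finally I would verify that $f$ is an embedding of $L$ into $\Q$: given distinct $a_m, a_n$ with $k := \max(m,n)$, the value $f(a_k)$ was placed at stage $k$ precisely so as to make $f(a_m) < f(a_n) \iff a_m < a_n$; hence $f$ is order-preserving on all of $L$, and in particular injective. There is no serious obstacle here --- this is Cantor's theorem, and one could simply cite it as folklore --- the single point to check is the nonemptiness of $J$, which is exactly where density and endpoint-freeness of $\Q$ enter, via a one-line case split; everything else is routine bookkeeping.
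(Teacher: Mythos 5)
Your argument is correct and is the standard ``forth'' construction; the paper states this result without proof, as a classical theorem of Cantor, so there is no in-paper argument to compare against. The only point worth tightening is that the enumeration $L = \{a_k \mid k \in \omega\}$ should be taken without repetitions (possible since $L$ is countably infinite), since otherwise $I^- \sqcup I^+$ need not exhaust $\{0,\dots,k-1\}$ at stage $k$; with that fixed, the recursion, the nonemptiness of $J$ via density and endpoint-freeness, and the final verification all go through exactly as you describe.
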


\begin{defn}
\label{defn: coloring}
Given a linear order $ L $ and a non-empty set $ S $, an \emph{$ S $-coloring} of $ L $ is a function $ \chi : L \to S $.

    An $ S $-coloring $ \chi $ of $ L $ is said to be \emph{dense}, if for every $ s \in S $ and $ x < x'$ in $ L $, there is some $ y \in L $ with $ x < y < x' $ and $ \chi(y) = s $.
\end{defn}

\begin{rem}
    Given a linear order $L$ and a set $S$, a dense $S$-coloring of $L$ is surjective, and hence $ S $ is countable.
\end{rem}

The archetypal operation on linear orders is that of taking \emph{ordered sums}.

\begin{defn}\label{defn: 5}
\label{defn: ordered sum}
    Given a linear order $ L $, a non-empty set $ S $ of linear orders, and an $ S $-coloring $ \chi : L \to S $, the \emph{ordered sum} $ \sum_{x \in L} \chi(x) $ is defined to be the set $ \bigsqcup_{x \in L} (\{x\} \times \chi(x)) $ under the relation $ (x, y) < (x', y') $ if  either $ x < x' $ or $ x = x' $ and $ y < y' $.
\end{defn}

The existence and essential uniqueness of dense colorings of $ \Q $ by a given countable set $ S $ is the substance of the following famous theorem due to Skolem.

\begin{thm}[\cite{rosenstein}, Theorems~7.11 and 7.13]
\label{thm: cantor}
    Let $ S $ be a non-empty countable set. Then there exists a dense coloring $ \chi: \Q \to S $. Furthermore, this coloring is unique: for any dense coloring $ \chi' : \Q \to S $, there exists an order isomorphism $ \iota : \Q \to \Q $ with $ \chi' = \chi \circ \iota $.
\end{thm}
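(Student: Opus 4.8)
The plan has two independent halves. For \emph{existence} I would reduce to the elementary fact that $\Q$ is a disjoint union of a countable family $\{D_s\}_{s\in S}$ of subsets, each dense in $\Q$: enumerate a countable base $I_0,I_1,\dots$ of nonempty bounded open intervals of $\Q$, fix a surjection $\N\to S$, and distribute the rationals among the $D_s$ greedily so that every $D_s$ meets every $I_n$ (a fresh rational of the appropriate class going into the relevant $D_s$ at each step, the leftover rationals dumped arbitrarily). Setting $\chi(q):=s$ for the unique $s$ with $q\in D_s$ then gives an $S$-coloring all of whose fibers are dense, which is precisely the stated density condition: for $x<x'$ in $\Q$ and $s\in S$, the interval $(x,x')$ contains a point of $D_s$. (Alternatively one builds a finite partial function $\Q\rightharpoonup S$ in stages, at stage $n$ both adjoining the $n$-th rational to the domain and coloring some still-uncolored rational inside the $n$-th triple $(a,b,s)$ by $s$; the union is total and dense.)

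For \emph{uniqueness} --- which is a homogeneity statement --- given dense colorings $\chi,\chi'\colon\Q\to S$, I would run the classical Cantor back-and-forth with an added color constraint, building an order-automorphism $\iota$ of $\Q$ with $\chi\circ\iota=\chi'$ as the union of an increasing chain $\iota_0\subseteq\iota_1\subseteq\cdots$ of finite, order-preserving, \emph{color-matching} partial maps (that is, $\chi(\iota_n(q))=\chi'(q)$ for all $q\in\dom\iota_n$). Fix enumerations $(a_k)_k$, $(b_k)_k$ of the source and target copies of $\Q$. At an even stage, take the least $a_k\notin\dom\iota_n$; its immediate $\dom\iota_n$-neighbors map under $\iota_n$ to the endpoints of an interval $J$ of $\Q$ that is nonempty and --- since $\Q$ has neither a least nor a greatest element --- unbounded on whichever side(s) $a_k$ has no neighbor, so density of $\chi$ supplies a point of $J$ of color $\chi'(a_k)$, to which we send $a_k$. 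At an odd stage we perform the mirror step for the least $b_k$ not yet in the range, using density of $\chi'$. The zig-zag forces $\dom\iota=\rng\iota=\Q$, order-preservation and color-matching pass to the union, and $\iota$ is the required isomorphism.

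The argument is routine, and I do not expect a real obstacle; the only places needing a moment's thought are (i) verifying that a dense coloring realizes every color inside each \emph{half-infinite} interval $(-\infty,c)\cap\Q$ and $(c,\infty)\cap\Q$ --- immediate, since these contain bounded subintervals and $\Q$ is unbounded --- which is exactly what legitimizes the extremal steps of the back-and-forth, and (ii) the standard bookkeeping ensuring the zig-zag exhausts both copies of $\Q$.
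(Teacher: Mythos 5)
The paper does not prove this statement; it is imported verbatim from Rosenstein (Theorems~7.11 and 7.13, the result going back to Skolem) and used as a black box. Your argument --- existence by partitioning $\Q$ into countably many dense fibers indexed by $S$, and uniqueness by a color-respecting back-and-forth, with the correct observation that density of a coloring already supplies every color inside half-infinite intervals, which is what licenses the extremal steps --- is the standard proof of exactly this result and is correct; the only bookkeeping point worth making explicit is that in the existence step you must iterate over all \emph{pairs} (interval, color), which your ``triple $(a,b,s)$'' formulation already does.
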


This theorem enables one to define the \emph{shuffles} of countable sets of linear orders.

\begin{defn}
\label{defn: shuffle}
A linear order $ L $ is said to be a \emph{shuffle linear order} (or \emph{shuffle}, for short), if there exists a set of linear orders $ S $ along with a dense coloring $ \chi : \Q \to S $ such that $ L \cong \sum_{r \in \Q} \chi(r) =: \Xi(S) $.
\end{defn}

\begin{rem}\label{rem: zero not in S}
    For any countable set $S$ of linear orders $\Xi(S) \cong \Xi(S \cup \{ \mbf 0 \})$.
\end{rem}

The following proposition states some properties of the shuffles. 
\begin{prop}[\cite{heilbrunner1980algorithm}]
\label{prop: property_shuffle}
    Let $S$ be a countable set of linear orders. Then, $\Xi(S)$ satisfy the following properties:
    \begin{enumerate}
        \item $\Xi(S) + \Xi(S) \cong \Xi(S)$;
        \item $\Xi(S) + s + \Xi(S) \cong \Xi(S)$ for every $s \in S$; and
        \item $\Xi(S' \cup S'') \cong \Xi(S)$, where $S' \subseteq S$ and $\emptyset \neq S'' \subseteq \{ t_1 + \Xi(S) + t_2 \mid t_1,t_2 \in S \cup \{ \mbf 0 \} \}$. 
    \end{enumerate} 
    
    In particular, the third property yields that $\Xi(S \sqcup \{ s \}) \cong \Xi(S)$, where $s :=\Xi(S)$.
\end{prop}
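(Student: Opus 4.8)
The plan is to obtain all three items from the essential uniqueness of dense colorings (Theorem~\ref{thm: cantor}): in each case I would exhibit a countable dense linear order without endpoints — hence a copy of $(\Q,<)$, by Cantor's classical characterization of $\Q$ — carrying a dense $S$-coloring whose associated ordered sum is the asserted right-hand side. Throughout I fix a dense $S$-coloring $\chi\colon\Q\to S$ realizing $\Xi(S)=\sum_{r\in\Q}\chi(r)$, and use freely that a dense coloring is surjective and that its restriction to a convex subset is again dense.

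\emph{Parts (1) and (2).} For (1), fix an irrational $\alpha$ and write $\Q=\Q_{<\alpha}\sqcup\Q_{>\alpha}$. Both halves are countable, dense and endpointless, hence $\cong\Q$; transporting the (dense) restrictions $\chi\res\Q_{<\alpha}$ and $\chi\res\Q_{>\alpha}$ along such isomorphisms and applying Theorem~\ref{thm: cantor} gives $\sum_{r\in\Q_{<\alpha}}\chi(r)\cong\Xi(S)\cong\sum_{r\in\Q_{>\alpha}}\chi(r)$, whence $\Xi(S)=\sum_{r\in\Q_{<\alpha}}\chi(r)+\sum_{r\in\Q_{>\alpha}}\chi(r)\cong\Xi(S)+\Xi(S)$. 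For (2), pick $q_0\in\Q$ with $\chi(q_0)=s$ (possible since $\chi$ is surjective) and write $\Q=\Q_{<q_0}\sqcup\{q_0\}\sqcup\Q_{>q_0}$; the same reasoning yields $\Xi(S)=\sum_{r\in\Q_{<q_0}}\chi(r)+s+\sum_{r\in\Q_{>q_0}}\chi(r)\cong\Xi(S)+s+\Xi(S)$.

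\emph{Part (3).} First, $S'\cup S''$ is non-empty (as $S''\neq\emptyset$) and countable (being a subset of $S\cup\{t_1+\Xi(S)+t_2\mid t_1,t_2\in S\cup\{\mbf0\}\}$, which is countable since $S$ is), so $\Xi(S'\cup S'')$ is defined; fix a dense $(S'\cup S'')$-coloring $\psi\colon\Q\to S'\cup S''$. I would rewrite $\Xi(S'\cup S'')=\sum_{r\in\Q}\psi(r)$ as an ordered sum of members of $S$: at a rational $r$ with $\psi(r)\in S'\subseteq S$, keep the block a single point colored $\psi(r)$; at a rational $r$ with $\psi(r)=t_1+\Xi(S)+t_2\in S''$, substitute $\Xi(S)=\sum_{q\in\Q}\chi(q)$, turning the block into a $\Q$-indexed ordered sum of members of $S$, preceded by a single point colored $t_1$ if $t_1\neq\mbf0$ and followed by a single point colored $t_2$ if $t_2\neq\mbf0$. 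This presents $\Xi(S'\cup S'')$ as $\sum_{m\in M}\chi_M(m)$ for an $S$-coloring $\chi_M\colon M\to S$, where $M$ is $(\Q,<)$ with every rational in $\psi^{-1}(S'')$ blown up into a copy of $\Q$ flanked by $0$, $1$, or $2$ extra points. One then checks that $M$ is a countable dense linear order without endpoints — routine, since every block is non-empty and $\Q$ is dense and endpointless — so $M\cong\Q$; and that $\chi_M$ is dense: given $s\in S$ and $m<m'$ in $M$ sitting in blocks over rationals $r\le r'$, if $r=r'$ then $m,m'$ lie in a common blown-up $\Q$-block and density of $\chi$ supplies the required point (using that $\Q$ has no endpoints when one of $m,m'$ is a flanking point), while if $r<r'$ density of $\psi$ produces $r''$ with $r<r''<r'$ and $\psi(r'')\in S''$, and the surjective coloring $\chi$ on the $\Q$-block over $r''$ already contains a point colored $s$ strictly between $m$ and $m'$. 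Transporting $\chi_M$ along $M\cong\Q$ gives a dense $S$-coloring of $\Q$, so $\Xi(S'\cup S'')=\sum_{m\in M}\chi_M(m)\cong\Xi(S)$ by Theorem~\ref{thm: cantor}. The final assertion is the instance $S'=S$, $S''=\{\mbf0+\Xi(S)+\mbf0\}=\{\Xi(S)\}$.

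The arguments are conceptually light — all the heavy lifting is packaged into Theorem~\ref{thm: cantor} and the triviality that restrictions of dense colorings stay dense — so the only real obstacle is the bookkeeping in (3): arranging the substitution so that $\Xi(S'\cup S'')$ is visibly an ordered sum of members of $S$ over some order $M$, and then the somewhat tedious case analysis verifying $M\cong\Q$ and the density of $\chi_M$ across the four shapes a block from $S''$ can take (according to which of $t_1,t_2$ is $\mbf0$), together with the boundary subcases in which a chosen point is a flanking point rather than interior to a $\Q$-block.
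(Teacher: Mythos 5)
Your argument is correct, but there is nothing in the paper to compare it against: the paper states this proposition with no proof at all, simply citing Heilbrunner's 1980 algorithm paper, so you have supplied a self-contained proof where the authors rely on a reference. Your route --- reducing each item to the uniqueness clause of Theorem~\ref{thm: cantor} by exhibiting the right-hand side as an ordered sum over a countable dense endpointless order carrying a dense $S$-coloring --- is the natural one, and all three parts check out: in (1) and (2) the restriction of a dense coloring to a convex piece of $\Q$ is indeed dense, and in (3) your index order $M$ is dense and endpointless because $\psi^{-1}(S'')$ is dense in $\Q$ (here it matters, as you implicitly use, that the blown-up blocks contribute the \emph{index set} $\Q$ of $\sum_{q\in\Q}\chi(q)$ rather than the sum itself, so they are non-empty even when some $\chi(q)=\mathbf 0$), while density of $\chi_M$ follows from surjectivity of $\chi$ on any intermediate $S''$-block together with the endpointlessness of that block's $\Q$-part when a flanking point is involved. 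The only cosmetic caveat is that your verification of the density of $\chi_M$ could state explicitly that it covers the colour $s=\mathbf 0$ when $\mathbf 0\in S$ (it does, via surjectivity), and that the final assertion's instance $S''=\{\mathbf 0+\Xi(S)+\mathbf 0\}$ is legitimate because $\mathbf 0\in S\cup\{\mathbf 0\}$ by fiat; neither affects correctness.
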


\section{Preliminaries on trees}
\label{S: Tree Preliminaries}
\begin{defn}
    For a set $ X $ and $ n \in \N $, by an \emph{$ n $-sequence} of elements in $ X $ we mean a function $ f: \{0, 1, \dots,  n - 1\} \to X $, also denoted $ \lan f(0), f(1), \dots, f(n-1) \ran $. In the case $ n = 0 $, we get the \emph{empty sequence}, denoted $ \lan \ran $. The set $ \bigcup_{n \in \omega} X^n $ of all \emph{finite sequences} of elements in $ X $ is denoted by $ X^{< \omega} $.
    
    Likewise, the set of all \emph{$ \omega $-sequences} of elements in $ X $, i.e., functions from $ \omega $ to $ X $, is denoted $ X^\omega $, while the union $ X^{< \omega} \sqcup X^\omega $ is denoted $ X^{\le \omega} $.  \emph{Concatenation} of pairs of sequences in $ X^\omega \times X^{\le \omega} $ is an associative noncommutative operation denoted by juxtaposition.




\end{defn}

\begin{example}\label{example: seq}
    The sequences $\sigma = \lan  2, 1, \infty \ran$ and $\tau = \lan 2, 3, -\infty \ran$ concatenate to form $\sigma \tau = \lan 2, 1, \infty, 2, 3, -\infty \ran$ with $\abs{\sigma \tau} = \abs{\sigma} + \abs{\tau} = 2 + 2 = 4 $. 
\end{example}

There are two natural partial order relations on $ \ol\Q^{\le\omega}$.

\begin{defn}
    Let $ \sqsubseteq, \prec $ be partial order relations on $ \ol\Q^{\le \omega} $ such that for any $ \sigma, \tau \in \ol\Q^{\le \omega} $ \begin{itemize}
        \item  $ \sigma \sqsubseteq \tau $ if and only if $ \sigma $ is a prefix of $ \tau $, i.e., $ \tau = \sigma\rho $ for some $ \rho \in \ol\Q^{\le \omega} $; and

        \item $ \sigma \prec \tau $ if and only if for some $ \rho \in \ol\Q^{< \omega} $ and $ q, q' \in \ol\Q $, we have $ \rho \lan q\ran  \sqsubseteq \sigma $, $ \rho \lan q' \ran \sqsubseteq \tau $ and $ q < q' $.
    \end{itemize}
\end{defn}

Note that $ \sigma $ and $ \tau $ are $ \prec $-comparable if and only if $ \sigma \not\sqsubseteq \tau \text{ and } \tau \not\sqsubseteq \sigma $. 

{For any sequence $ x \in \ol{\Q}^{< \omega} $, define $ \abs{x} $ to be the number of occurrences of rationals in $ x $.}

\begin{example}
 In Example~\ref{example: seq}, $\sigma \sqsubseteq \sigma \tau$ but $\tau \not \sqsubseteq \sigma \tau$ because $\sigma \tau \neq \tau \rho$ for any $\rho \in \ol\Q^{\le \omega}$. Also, $\sigma \prec \tau$ since they both differ first at index $1$, where $\sigma(1) = 1 < 3 = \tau(1)$.
\end{example} 

\begin{defn}
\label{defn: preliminary tree}
    A \emph{tree} is a downwards-closed subposet $ T $ of the poset $ ({\ol{\Q}}^{< \omega}, \sqsubseteq) $. 
    
    Elements of a tree are called its \emph{nodes}. If $ \tau, \sigma \in T $ with $ \sigma = \tau \lan q \ran $ for some $ q \in \ol\Q $, then $ \tau $ is said to be the \emph{parent} of its \emph{child }$ \sigma $. Nodes with no children are called \emph{leaf} nodes. 
    An infinite sequence $ r \in \ol\Q^\omega $ is said to be a  \emph{branch} of $ T $ if $ r \res n \in T $ for every $ n \in \N $. 

    The set of leaf nodes of a tree $ T $ is called its \emph{frontier} and denoted $ \front(T) $, while the set $ \ol \front(T) := \front(T) \cup \{r \in \ol{\Q}^\omega \mid r $ is a branch of $ T \} $ of leaf nodes and branches is called the \emph{closed frontier} of $ T $.  For any $ n \in \N $, the \emph{$ n^{th} $-layer} $ \layer(T, n) $ of $ T $ is the set $ \{x \in \front(T) \mid \abs{x} = n + 2\} $.

    Under $ \prec $, the frontier $ \front(T) $ of any tree $ T $ forms a linear order $ \Front(T) $ called its \emph{frontier}. Similarly, $ \ol{\Front}(T) := (\front(T), \prec) $ is also a linear order.

    For any set $ S $, the pair $ (T, c:T \to S) $ is called a colored tree, where $c$ is an \emph{$ S $-coloring} of $ T $. Thus, it is easy to check that $ (\Front(T), c: \Front(T) \to S) $ is a colored linear order.
\end{defn}
	

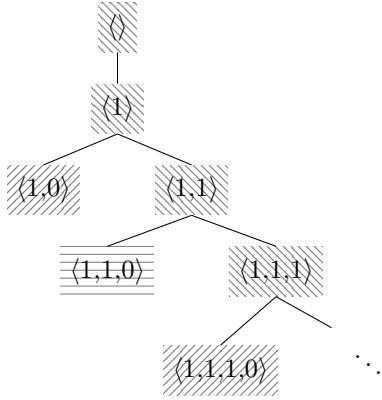
\begin{figure}[h] 
    \centering 
    \begin{forest}
    [$ \lan \ran $, s sep=10mm, for tree={pattern = north west lines, pattern color = gray}
        [$\lan 1 \ran$, s sep=10mm, 
            [$\lan 1\text{$,$} 0 \ran$, for tree={pattern = north east lines, pattern color = gray}] 
            [$\lan 1\text{$,$}1\ran$, s sep = 10 mm, 
                [$\lan 1\text{$,$}1\text{$,$}0 \ran$, for tree={pattern=horizontal lines, pattern color=gray}]
                [$\lan 1\text{$,$}1\text{$,$}1 \ran$, s sep = -1 mm, 
                    [$\lan 1\text{$,$}1\text{$,$}1\text{$,$}0 \ran$, for tree={pattern = north east lines, pattern color = gray}]
                    [\;\;\;\;\;\;\;\;\;\;$ \ddots $, for tree={fill=none, draw=none}]
                ]
            ]
        ]
    ]
    \end{forest}
    \caption{An example of a tree} 
    \label{fig:tree-exmp} 
\end{figure}

\begin{example}
    In Figure \ref{fig:tree-exmp}, we consider a tri-colored tree $T$ with its bi-colored $ \front(T) $ representing all finite binary $(0\mbox{-}1)$ sequences of length atleast $2$ ending at the first occurrence of $0$. Observe that $\sigma \prec \tau$ if and only if $\abs{\sigma} < \abs{\tau}$. Thus, $\Front(T) \cong \omega$ and $\ol{\Front}(T) \cong \omega + \mbf 1$.
\end{example}












\section{Constructing embeddings, partitions, and trees}

\label{S: Constructing}



In the sequel, we use $ i $ to denote a typical element of $ \{\plus, \minus\} $. Recall from the hypothesis of Theorem~\ref{thm: main theorem} that $L_i \cong \Xi(S^0_i) $ for a countable set $S^0_i$ of linear orders. 
{Thanks to Remark~\ref{rem: zero not in S}, we can assume $\mbf 0 \not\in S^0_\plus \cup S^0_\minus$.} 
We also assume without loss of generality that the underlying sets of the linear orders in $ S_\plus^0, S_\minus^0 $ are pairwise disjoint. Moreover, we can assume that $ S^0_\plus\cup S^0_\minus\neq\emptyset $, for otherwise $ L_\plus = L_\minus = \mbf 0 $ and the conclusion is trivial. {Choose and fix convex embeddings $ f_i : L_i \to L_{-i} $ and $ \alpha_i^0, \delta_i^0 \subseteq L_{-i} $ so that $ L_{-i} = \alpha^0_{-i} \sqcup f_i(L_i) \sqcup \delta^0_{-i} $} with $\alpha^0_{-i} \ll f_i(L_i)\ll\delta^0_{-i} $. Set $F_i:= S^0_i \sqcup \{ \alpha^0_i \sqcup \delta^0_i \}\setminus\{\mbf0\}$. {Use an enumeration for $ F_i $ where $\alpha_i^0$ precedes $\delta_i^0$ to define the ordered sum $\sum_{L \in F_i} L $, and fix an order embedding $ h_i : \sum_{L \in F_i} L \to \Q $ using Theorem~\ref{thm: cantor}.} For each $ L' \in F_i $, the composite $ L' \hookrightarrow \sum_{L \in F_i} L \xrightarrow[]{h_i} \Q $ is an order isomorphism, and we sometimes abuse notation to denote the inverse of this composite by  $ h_{i}^{-1} $. Further set $s_i := \Xi(S^0_i)$, $S_i := S^0_i \sqcup \{ s_i \}$ and $\ol{F_i} := F_i \sqcup \{ \mbf 0 \} \sqcup \{ s_i \}$.





In view of Proposition~\ref{prop: property_shuffle}, we assume $ L_i = \sum_{q \in \Q} \chi_i(q) $ for a dense coloring $ \chi_i : \Q \to S_i $. In the sequel, we use this sum as the presentation of $L_i$, while often referring to its elements as pairs $(q,x)$, in view of Definition~\ref{defn: 5}. Since $ S^0_i \not= \emptyset $ implies that $ \abs{S_i} \ge 2 $, thanks to Theorem~\ref{thm: cantor}, we may assume that the colorings $ \chi_\plus $ and $ \chi_\minus $ are chosen to satisfy $ \chi_\plus^{-1}(s_\plus) = \chi_\minus^{-1}(s_\minus) =: R $.

\begin{defn}
For each $ q \in R $, let $ f_{i, q} : L_i \to L_{-i} $ be defined as $ f_{i, q}(x) := (q, f_i(x)) $ for $ x \in L_i $. For any sequence $ \rho = \lan q_0, q_1, \dots, q_n \ran \in R^{< \omega} $, define $ f_{i, \rho} := f_{i, q_0} \circ f_{-i, q_1} \circ \dots \circ f_{(-1)^ni, q_n} : L_{(-1)^ni} \to L_
{-i} $. For technical reasons, we also take $ f_{i, \lan \ran} $ to be the identity map on $ L_{-i} $. 
\end{defn}

The maps $f_{i,q}$ are injective, and hence so are $f_{i,\rho}$.


\begin{defn}
Set $ g_i: L_i \to \mathcal{P}(L_{-i})$ as $x \mapsto \{ f_{i,q}(x) \mid q \in R \}$.
Then $ g_i(x) \cap g_i(y) = \emptyset $ whenever $ x, y \in L_i $ are distinct.
By an abuse of notation, for any $ A \subseteq L_i $, we denote $ \bigsqcup_{x \in A} g_i(x) \in \mathcal{P}(L_{-i}) $ by $ g_i(A) $.
In addition, define $ \lan A_{i, n} \mid n  \in \N \ran $ inductively as $ A_{i, 0} := L_i $ and $ A_{i, n + 1} := g_{-i}(A_{-i, n}) $ for each $ n \in \N $.  Finally, let $ A_{i, \omega} \defeq \bigcap_{n \in \N} A_{i, n} $.
\end{defn}

\begin{rem}
\label{rem: partition}
Note that $ L_i = A_{i, 0} \supseteq A_{i, n} \supseteq A_{i, n + 1} \supseteq A_{i, \omega} $ for each $ n $, so that $ L_i = \bigsqcup_{n \in \N} (A_{i, n} \setminus A_{i, n + 1}) \sqcup A_{i, \omega} $.  
\end{rem}


\begin{rem}
    The fact that $ g_i(x), g_i(y) $ are disjoint whenever $ x, y \in L_i $ are distinct, yields $$ g_i(A_{i, \omega}) = g_i(\bigcap_{n \in \N} A_{i, n}) = \bigcap_{n \in \N} g_i(A_{i, n}) = \bigcap_{n \in \N} A_{-i, n + 1} = A_{-i, \omega}. $$
\end{rem}  




\def\star{*}
 
We now define colored trees $ T_i $,  whose layers will correspond to all but one part of the partition obtained in Remark~\ref{rem: partition}.


\begin{defn} \label{defn: tree}

Let $ (T_i, c_i: T_i \to \ol{F_\plus} \cup \ol{F_\minus}) $ be the colored tree with root $ \lan \ran$ coloured $s_i $ constructed inductively as follows: for any $ (\sigma, q) \in T_i \times \ol{\Q}$, $ \sigma \lan q \ran \in T_i $ if one of the following conditions hold.

\begin{enumerate}

\item $ c_i(\sigma) \in F_\plus \cup F_\minus$ and $ q \in h_i (c_i(\sigma)) $. In this case, set $ c_i(\sigma \lan q \ran) := \mbf 0 $.

\item $ c_i(\sigma) = s_j \text{ and } q \in \Q $. In this case, set $ c_i(\sigma \lan q \ran) := \chi_j(q) $.

\item $ c_i(\sigma) = s_j $, $ \sigma \not= \lan \ran, q = - \infty $ and $\alpha^0_j \not= \mbf 0$. In this case, set $ c_i(\sigma \lan q \ran) := \alpha^0_j $.

\item $ c_i(\sigma) = s_j $, $ \sigma \not= \lan \ran $, $ q = \infty $ and $\delta^0_j \not= \mbf 0$. In this case, set $ c_i(\sigma \lan q \ran) := \delta^0_j $.




    
\end{enumerate}

Let $ T_i' := T_i \setminus \front(T_i) $. 
\end{defn}

We make several observations.

\begin{rem}\label{rem: 1}
    Each sibling of a leaf node in $ T_i $ is again a leaf node, so $ \front(T_i') $ is the set of parents of leaf nodes of $ T_i $. In fact, $ c_i(\sigma)\in F_\plus \cup F_\minus$ if and only if $ \sigma\in\front(T_i') $.
\end{rem}

\begin{rem}\label{rem: 2}
    For any $\rho\in T_i$, $c_i(\rho)\in\{s_+,s_-\}$ if and only if $\rho\in R^{<\omega}$. {As a consequence, for any $ r \in \ol{\Q}^\omega $,} $ r $ is a branch of $ T_i $ if and only if $ r $ is a branch of $ T_i' $ if and only if $ r \in R^\omega $.
\end{rem}

\begin{rem}
\label{rem: biembeddable trees}
    For any $ n \in \N $, $ \rho \in \ol\Q^{n} $, let $ T_{i, n}^\rho :=\{\rho\sigma\in T_i \mid \sigma(0) \in \Q \} $ so that the obvious isomorphism $ (T_{(-1)^ni}, \sqsubseteq) \cong (T_{i, n}^\rho, \sqsubseteq) $ provides an embedding of $ T_{(-1)^ni} $ into $ T_i $ ``below'' any $ \rho \in R^n $.
\end{rem}

In view of the previous remark, for $ \rho \in R^{n} $, let $ A_{i, n}^\rho := \rng(f_{-i, \rho}) $. Furthermore, for each $ \sigma, \tau \in R^n $, let $ F_\sigma^\tau : A_{i, n}^\sigma \to A_{i, n}^\tau $ denote the isomorphism $ f_{-i, \tau} \circ f_{-i, \sigma}^{-1} $.

\begin{rem}
\label{rem: stability}
    For any $ \sigma, \tau \in R^n, \rho \in R^m $, the isomorphism $ F_{\sigma \rho}^{\tau \rho}$ is the restriction of $ F_{\sigma}^\tau$ to $A_{i, n + m}^{\sigma \rho} $.
\end{rem}

\begin{rem}
\label{rem: decreasing}
    For any $ \rho\lan q\ran \in R^{n+1}$, we have $$A_{i, n + 1}^{\rho \lan q \ran} = \rng(f_{-i, \rho\lan q \ran}) = f_{-i, \rho}(f_{(-1)^{n+1}i, q}(L_{(-1)^{n + 1}i})) \subseteq f_{-i, \rho}(L_{(-1)^ni}) = \rng(f_{-i, \rho}) = A_{i, n}^\rho .$$
\end{rem}

 Due to Remark~\ref{rem: decreasing},  $ \lan A_{i, n}^{r \res n} \mid n \in \N \ran $ is a $ \subseteq $-decreasing sequence of subsets of $ L_i $ for any $ r \in R^{\omega} $. We set $ A_{i, \omega}^r := \bigcap_{n \in \N} A_{i, n}^{r \res n} $ and end this section by decomposing $A_{i,\omega}$ as a disjoint union.

\begin{prop}\label{prop: Aiw partitioned into intervals}
\begin{enumerate}
    \item For $ n \in \N $ and $ \rho_1\neq \rho_2$ in $R^n $, $ A_{i, n}^{\rho_1}\cap A_{i, n}^{\rho_2}=\emptyset$. Moreover, $ \bigsqcup_{\rho \in R^n} A_{i, n}^\rho = A_{i, n} $.
    \item $ A_{i, \omega} = \bigsqcup_{r \in R^\omega} A_{i, \omega}^r $.
\end{enumerate}
\end{prop}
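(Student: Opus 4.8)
The plan is to prove part (1) first and then deduce part (2) by an intersection argument. For part (1), I would argue by induction on $n$. The base case $n = 0$ is trivial since $R^0 = \{\lan\ran\}$ and $A_{i,0}^{\lan\ran} = \rng(f_{-i,\lan\ran}) = L_i = A_{i,0}$. For the inductive step, I would write $\rho_1 = \lan q\ran\sigma_1$ and $\rho_2 = \lan q'\ran\sigma_2$ with $q, q' \in R$ and $\sigma_1, \sigma_2 \in R^{n-1}$; more convenient here is to peel off the \emph{last} coordinate instead, writing $\rho_j = \sigma_j\lan q_j\ran$. By Remark~\ref{rem: decreasing}, $A_{i,n}^{\sigma_j\lan q_j\ran} = f_{-i,\sigma_j}(f_{(-1)^n i, q_j}(L_{(-1)^n i})) \subseteq A_{i,n-1}^{\sigma_j}$. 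If $\sigma_1 \neq \sigma_2$, then by the inductive hypothesis $A_{i,n-1}^{\sigma_1} \cap A_{i,n-1}^{\sigma_2} = \emptyset$, so the two sets are disjoint. If $\sigma_1 = \sigma_2 =: \sigma$ but $q_1 \neq q_2$, then since $f_{-i,\sigma}$ is injective it suffices to show $f_{(-1)^n i, q_1}(L_{(-1)^n i}) \cap f_{(-1)^n i, q_2}(L_{(-1)^n i}) = \emptyset$; but $f_{j, q}(x) = (q, f_j(x))$ lives in the $q$-indexed summand of $L_{-j} = \sum_{q\in\Q}\chi_{-j}(q)$, and distinct $q$'s give disjoint summands. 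This is exactly the statement (already recorded in the text) that $g_j(x) = \{f_{j,q}(x) \mid q\in R\}$ decomposes $L_{-j}$ into disjoint pieces indexed by $R$; indeed $\bigsqcup_{q\in R} f_{j,q}(L_j) = g_j(L_j) = A_{-j,1}$.

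For the "moreover" clause $\bigsqcup_{\rho\in R^n} A_{i,n}^\rho = A_{i,n}$, again induct on $n$. For $n+1$: using Remark~\ref{rem: decreasing} and the identity $A_{i,n+1} = g_{-i}(A_{-i,n})$ (unwinding the definition, $A_{i,n+1} = \rng$ of the appropriate composite), I would show
\[
\bigsqcup_{\rho\lan q\ran \in R^{n+1}} A_{i,n+1}^{\rho\lan q\ran} = \bigsqcup_{\rho\in R^n} f_{-i,\rho}\Bigl(\bigsqcup_{q\in R} f_{(-1)^{n+1}i,q}(L_{(-1)^{n+1}i})\Bigr) = \bigsqcup_{\rho\in R^n} f_{-i,\rho}(A_{(-1)^{n+1}i,1}).
\]
Now $A_{(-1)^{n+1}i,1} = g_{(-1)^n i}(A_{(-1)^n i,0})$, and one checks $f_{-i,\rho}(A_{(-1)^{n+1}i,1})$ is precisely $A_{i,n}^\rho$ intersected with $A_{i,n+1}$ — more cleanly, $f_{-i,\rho}$ conjugates the level-one decomposition of $L_{(-1)^n i}$ into the decomposition of $A_{i,n}^\rho$, so summing over $\rho$ and applying the inductive hypothesis $\bigsqcup_\rho A_{i,n}^\rho = A_{i,n}$ together with $g_{-i}(A_{-i,n}) = A_{i,n+1}$ gives the claim. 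The bookkeeping with the alternating signs $(-1)^n i$ and the nested composites $f_{-i,\rho}$ is the fiddly part; I would lean on Remark~\ref{rem: stability} and Remark~\ref{rem: decreasing} to keep it manageable.

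For part (2), compute
\[
A_{i,\omega} = \bigcap_{n\in\N} A_{i,n} = \bigcap_{n\in\N} \ \bigsqcup_{\rho\in R^n} A_{i,n}^\rho.
\]
Since for a fixed $r\in R^\omega$ the sets $A_{i,n}^{r\res n}$ are $\subseteq$-decreasing in $n$ (Remark~\ref{rem: decreasing}), and since the decompositions at consecutive levels refine each other — every $A_{i,n+1}^{\rho'}$ with $\rho' \in R^{n+1}$ is contained in exactly one $A_{i,n}^\rho$, namely $\rho = \rho'\res n$, by Remark~\ref{rem: decreasing} and part (1) — an element $x \in A_{i,\omega}$ lies in exactly one block at each level, and these blocks are nested, so they determine a unique branch $r\in R^\omega$ with $x \in \bigcap_n A_{i,n}^{r\res n} = A_{i,\omega}^r$. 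Conversely each $A_{i,\omega}^r \subseteq A_{i,n}^{r\res n} \subseteq A_{i,n}$ for all $n$, hence $A_{i,\omega}^r \subseteq A_{i,\omega}$, and the $A_{i,\omega}^r$ are pairwise disjoint because already $A_{i,\omega}^{r}$ and $A_{i,\omega}^{r'}$ for $r\neq r'$ sit inside disjoint blocks $A_{i,n}^{r\res n}, A_{i,n}^{r'\res n}$ at the first level $n$ where $r\res n \neq r'\res n$ (part (1)). This yields $A_{i,\omega} = \bigsqcup_{r\in R^\omega} A_{i,\omega}^r$.

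The main obstacle I anticipate is not conceptual but notational: correctly tracking the parity $(-1)^n i$ in the source/target of each $f_{-i,\rho}$ and verifying that $f_{-i,\rho}$ really does carry the canonical level-one partition of $L_{(-1)^n i}$ onto the partition of $A_{i,n}^\rho$ by the $A_{i,n+1}^{\rho\lan q\ran}$. Setting up the induction so that this "conjugation of partitions" statement is itself the inductive hypothesis (rather than just the set equality) should make the argument go through cleanly.
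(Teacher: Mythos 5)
Your proposal is correct and follows essentially the same route as the paper: the same induction on $n$ for part (1) with the same two-case disjointness analysis, and the same nested-blocks argument for part (2); the only cosmetic difference is that the paper peels off the \emph{first} coordinate of $\rho$ rather than the last, which makes the ``moreover'' clause fall out in one line from $A_{i,n+1} = g_{-i}(A_{-i,n}) = \bigsqcup_{q\in R} f_{-i,q}(A_{-i,n})$ and the inductive hypothesis applied to $A_{-i,n}$. One parity slip to fix: $\bigsqcup_{q\in R} f_{(-1)^{n+1}i,\,q}(L_{(-1)^{n+1}i})$ equals $A_{(-1)^{n}i,\,1}$ (a subset of $L_{(-1)^n i}$, the domain of $f_{-i,\rho}$), not $A_{(-1)^{n+1}i,\,1}$.
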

\begin{proof}
    $(1)$ The conclusion is trivial when $ n = 0 $, since $ R^0 = \{\lan \ran\} $ and $ A_{i, 0}^{\lan \ran} = \rng(f_{-i, \lan \ran}) = L_{i} = A_{i, 0} $.

    Now suppose the first statement holds for $ n \in \N $, and observe that for distinct $ \rho_1=\lan q_1\ran\rho'_1, \rho_2=\lan q_2\ran\rho'_2 \in R^{n + 1} $, there are two cases.
    
    If $ q_1 \not = q_2 $, then $ A_{i, n+1}^{\rho_1} \cap A_{i, n+1}^{\rho_2} = f_{-i, q_1} (A_{-i, n}^{\rho_1'}) \cap f_{-i, q_2} (A_{-i, n}^{\rho_2'}) \subseteq \rng(f_{-i, q_1}) \cap \rng(f_{-i, q_2}) = \emptyset $. 
    
    On the other hand, if $ q_1 = q_2 =: q $ then $ \rho_1' \not= \rho_2' $. By the inductive hypothesis and the injectivity of $ f_{i, q} $, we have $  A_{i, n+1}^{\rho_1} \cap A_{i, n+1}^{\rho_2} = f_{-i, q}(A_{-i, n}^{\rho_1'}) \cap f_{-i, q}(A_{-i, n}^{\rho_2'}) = f_{-i, q}(A_{-i, n}^{\rho_1'} \cap A_{-i, n}^{\rho_2'}) = f_{-i, q}(\emptyset) = \emptyset $.
    
    For the final conclusion, note that $$ A_{i, n + 1} = g_{-i}(A_{-i, n}) = \bigsqcup_{q \in R} f_{-i, q} \left(\bigsqcup_{\rho' \in R^n} A_{-i, n}^{\rho'}\right) = \bigsqcup_{q \in R, \rho' \in R^n} f_{-i, q}(A_{-i, n}^{\rho'}) = \bigsqcup_{\rho \in R^{n + 1}} A_{i, n+1}^\rho .$$
    
    $(2)$ First, observe that for any $ x \in A_{i, \omega} = \bigcap_{n \in \N} A_{i, n} $ and $ n \in \N $, there exists $ \rho_n \in R^n $ with $ x \in A_{i, n}^{\rho_n} $. Using Remark~\ref{rem: decreasing} together with $(1)$ above, we obtain $ A_{i, n + 1} \cap A_{i, n}^{\rho_n} = \bigsqcup_{q \in R} A_{i, n + 1}^{\rho_n \lan q \ran} $, so that $\{ \rho_n\mid  n\in\mathbb N\}$ is a $\sqsubseteq$-increasing chain. {Let $ r \in R^{\omega} $ be the limit of this sequence} so that $ x \in \bigcap_{n \in \N} A_{i, n}^{\rho_n} = \bigcap_{n \in \N} A_{i, n}^{r \res n} = A_{i, \omega}^r $. This proves the inclusion $A_{i, \omega}\subseteq \bigcup_{r \in R^\omega} A_{i, \omega}^r$. The reverse inclusion is clear, and hence the equality. It only remains to show that the union on the right side is disjoint. For this, note that given any distinct $ r, r' \in R^\omega$, choosing the least $ n \in \N $ such that $ r \res n \not= r' \res n $, we see that $ A_{i, \omega}^r \cap A_{i, \omega}^{r'} \subseteq A_{i, n}^{r \res n} \cap A_{i, n}^{r' \res n} = \emptyset $.
\end{proof}

\section{Proof of Theorem~\ref{thm: main theorem}}
\label{S: Proving}

\subsection{\texorpdfstring{$ \Front(T_i) \cong L_i \setminus A_{i, \omega} $}{Front(T_i) ≅ L_i \ A_{i, ω}}}


    
Recall that $\front(T_i)= \bigsqcup_{n \in \mathbb{N}}\layer(T_i, n)$.
\begin{rem}\label{rem: leaves}
Definition \ref{defn: tree} ensures that any $ \sigma \in \layer(T_i, n) $ has exactly one of the following forms: $ \rho \lan q_0, q_1 \ran $ or  $ \rho \lan q_0, \pm\infty, q_1 \ran $ for some $ \rho \in T_i $, $ q_0, q_1 \in \Q $.
\end{rem}

\begin{defn}
   For \( n \in \mathbb{N} \), define \( G_{i, n} : \layer(T_i, n) \to L_i \) by $\sigma\mapsto f_{i, \rho} (q_0, {h_{(-1)^n i}^{-1}(q_1)})$ if $\sigma$ is of the form  $\rho \lan q_0, q_1 \ran$ or $\rho \lan q_0, \pm \infty, q_1 \ran$. Further define \( G_i :\front(T_i) \to L_i \) by $G_i(\sigma):=G_{i,|\sigma|-2}(\sigma)$.
\end{defn}
Using monotonicity of $h_{\pm}$ and $f_{i,\rho}$, it is easy to see that $G_{i,n}$ is a monotone function. In fact, we will show later in Proposition~\ref{prop: Front(T) is L} that $G_i$ is also monotone.

Since $ f_{i, \lan\ran} $ is the identity map, we observe that $G_{i, 0}(\sigma) = (q_0, h_{i}^{-1}(q_1))$ if $ \sigma $ is of the form $ \lan q_0, q_1 \ran $ or $ \lan q_0, \pm \infty, q_1 \ran $.
    
\begin{rem}
For any $ n \in \N $ and $ \lan q \ran \sigma \in \layer(T_i, n+1) $, \begin{equation}\label{eqn: Gin inductive}
        G_{i, n+1}(\lan q \ran\sigma) = f_{i, q}(G_{-i, n}(\sigma)). 
    \end{equation}
\end{rem}
\begin{prop}
    For each $ n \in \N$, $ \rng(G_{i, n})=(A_{i, n} \setminus A_{i, n + 1}) $.
\end{prop}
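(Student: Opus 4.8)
The plan is to prove the two inclusions $\rng(G_{i,n}) \subseteq A_{i,n}\setminus A_{i,n+1}$ and $A_{i,n}\setminus A_{i,n+1} \subseteq \rng(G_{i,n})$ separately, using the inductive formula \eqref{eqn: Gin inductive} to reduce to the base case $n=0$. For the base case, recall that $G_{i,0}(\sigma) = (q_0, h_i^{-1}(q_1))$ where $\sigma$ is $\lan q_0, q_1\ran$ or $\lan q_0,\pm\infty, q_1\ran$. I would unwind what membership in $A_{i,1} = g_{-i}(A_{-i,0}) = g_{-i}(L_{-i}) = \bigsqcup_{q\in R} f_{-i,q}(L_{-i})$ means: an element $(q,x) \in L_i$ lies in $A_{i,1}$ iff $q\in R$ and $x\in f_{-i}(L_{-i})$. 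So $L_i\setminus A_{i,1}$ consists of those $(q,x)$ with either $q\notin R$, or $q\in R$ but $x \in \alpha^0_{-i}\sqcup\delta^0_{-i}$ (using $L_{-i} = \alpha^0_{-i}\sqcup f_{-i}(L_{-i})\sqcup\delta^0_{-i}$, and here I must be careful with the sign: the convex embedding into $L_i$ is $f_{-i}$, with complement $\alpha^0_i\sqcup\delta^0_i$ — I'll need to track the $\pm$ signs carefully against Definition~\ref{defn: tree} and the definition of $F_i, h_i$).

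The heart of the base case is a bijection-chasing argument. By Definition~\ref{defn: tree}, a layer-$0$ leaf $\sigma$ of $T_i$ has root colored $s_i$, so its first coordinate $q_0$ ranges over all of $\Q$ (clause (2)), giving child color $\chi_i(q_0)$; if $\chi_i(q_0) = s_i$, i.e. $q_0\in R$, then $\sigma$ continues via clause (2) again (child color $\chi_i(q_1)$, but then $\sigma\lan q_0,q_1\ran$ is not a leaf unless... wait) — more carefully, I need $\sigma$ to actually be a \emph{leaf}, i.e. its color is in $F_\plus\cup F_\minus$ by Remark~\ref{rem: 1}, hence $\sigma$'s parent has color $s_j$ and $\sigma$ itself has color some $L'\in F_i$ (clause (2) with $\chi_i(q) = L'\ne s_i$, or clauses (3),(4) with color $\alpha^0_i$ or $\delta^0_i$), and then one more step lands on $\mbf 0$ via clause (1), with $q_1\in h_i(L')$. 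Running $h_i^{-1}$ on $q_1$ recovers the corresponding element of $L'\subseteq\sum_{L\in F_i}L$; since $F_i = S^0_i\sqcup\{\alpha^0_i\sqcup\delta^0_i\}\setminus\{\mbf 0\}$ and $S_i = S^0_i\sqcup\{s_i\}$, the element $(q_0, h_i^{-1}(q_1))$ with $q_0\notin R$ ranges exactly over $\{(q,x): \chi_i(q)\in S^0_i, x\in\chi_i(q)\}$, and with $q_0\in R$ (color $\alpha^0_i$ or $\delta^0_i$, forcing the $\pm\infty$ form by clauses (3),(4)) it ranges over $\{(q,x): q\in R, x\in\alpha^0_i\sqcup\delta^0_i\}$. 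This is precisely $L_i\setminus A_{i,1}$, and the map is injective because $h_i$ restricted to each $L'\in F_i$ is an isomorphism onto its image and these images are disjoint. So $G_{i,0}$ is a bijection onto $A_{i,0}\setminus A_{i,1}$.

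For the inductive step, suppose $\rng(G_{-i,n}) = A_{-i,n}\setminus A_{-i,n+1}$. Every element of $\layer(T_i,n+1)$ has the form $\lan q\ran\sigma$ with $q\in R$ (since the root is colored $s_i$, the first step uses clause (2) and $\sigma'$ must continue into the tree, which by Remark~\ref{rem: 2} forces $q\in R$) and $\sigma\in\layer(T_{-i},n)$ — here I'd invoke the isomorphism $T_{-i}\cong T_{i,1}^{\lan q\ran}$ of Remark~\ref{rem: biembeddable trees}. Then by \eqref{eqn: Gin inductive}, $\rng(G_{i,n+1}) = \bigcup_{q\in R} f_{i,q}(\rng(G_{-i,n})) = \bigcup_{q\in R} f_{i,q}(A_{-i,n}\setminus A_{-i,n+1})$. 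Since $f_{i,q}(x) = (q, f_i(x))$ is injective and $f_i$ is injective, $f_{i,q}(A_{-i,n}\setminus A_{-i,n+1}) = f_{i,q}(A_{-i,n})\setminus f_{i,q}(A_{-i,n+1})$, and taking the union over $q\in R = \chi_i^{-1}(s_i)$ gives $g_i(A_{-i,n})\setminus g_i(A_{-i,n+1}) = A_{i,n+1}\setminus A_{i,n+2}$ by definition of the $A_{i,\bullet}$ — using here that $g_i$ preserves disjointness (so that the set-difference commutes with the disjoint union over $q$), which was noted right after the definition of $g_i$. This closes the induction.

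The main obstacle I anticipate is bookkeeping rather than conceptual depth: getting the $\pm$ signs and the identification of which pieces of $L_i$ fall outside $A_{i,1}$ exactly right in the base case, and making rigorous the claim that a layer-$n$ leaf of $T_i$ factors as $\lan q\ran$ (with $q\in R$) followed by a layer-$(n-1)$ leaf of $T_{-i}$ — this needs Remarks~\ref{rem: 1}, \ref{rem: 2}, and \ref{rem: biembeddable trees} assembled carefully. The set-theoretic manipulation $f_{i,q}(A\setminus B) = f_{i,q}(A)\setminus f_{i,q}(B)$ and its compatibility with the disjoint union defining $g_i$ is routine but should be stated explicitly to justify why $\rng(G_{i,n+1})$ is exactly the \emph{difference} $A_{i,n+1}\setminus A_{i,n+2}$ rather than merely contained in $A_{i,n+1}$.
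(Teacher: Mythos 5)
Your proposal is correct and follows essentially the same route as the paper: the base case identifies $L_i\setminus A_{i,1}$ fiber-by-fiber with the layer-$0$ leaves via $h_i^{-1}$, and the inductive step pushes the hypothesis through Equation~\eqref{eqn: Gin inductive} using the injectivity and pairwise disjointness of the maps $f_{\cdot,q}$. If anything, you spell out the range computation $\bigsqcup_q f_{\cdot,q}(A_{-i,n}\setminus A_{-i,n+1}) = A_{i,n+1}\setminus A_{i,n+2}$ more explicitly than the paper, whose inductive paragraph concentrates on order-preservation and leaves that set-theoretic step implicit.
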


\begin{proof}
Suppose $ n = 0 $. To see that $ \rng(G_{i, 0})= A_{i, 0} \setminus A_{i, 1}$, note that $ A_{i, 0} \setminus A_{i, 1} = L_i \setminus g_{-i}(L_{-i}) = \bigsqcup_{q \in \Q}\left( \{q\} \times \chi_i(q) \right)\setminus \bigsqcup_{q \in R} f_{-i, q}(L_{-i})  = \bigsqcup_{q \in \Q} (\{q\} \times L_q) $, where $$ L_q := \begin{cases}
        \chi_i(q) & \text{if $ q \not\in R $}{;}\\
        \alpha^0_i \sqcup \delta^0_i & \text{otherwise}.
    \end{cases} $$ 
    
If $q\notin R$, then $ \{q\} \times L_q = G_{i, 0}(\{\lan q, q' \ran \in \layer(T_i, 0) \}) $, otherwise $ \{q\} \times L_q = G_{i, 0}(\{\lan q, \pm \infty, q_0 \ran \in \layer(T_i, 0)\}) $.


    Now suppose we have proved that $ G_{j, k} : \layer(T_j, k) \to (A_{j, k} \setminus A_{j, k + 1}) $ is an order-preserving bijection for each $ j \in \{\plus, \minus\} $ and $ k \le n $. To show the same for $ G_{i, n + 1} $, we use Equation~\eqref{eqn: Gin inductive}. Suppose $ \lan q \ran \sigma, \lan q' \ran \sigma' \in \layer(T_i, n) $ with $ \lan q \ran \sigma \prec \lan q' \ran \sigma' $. If $ q < q' $, then $ G_{i, n+1}(\lan q \ran\sigma) \in \rng(f_{i, q}) \ll \rng(f_{i, q'}) \ni G_{i, n+1}(\lan q' \ran\sigma') $ and we are done. Otherwise $ \sigma, \sigma' \in \layer(T_{-i}, n) $ and $ \sigma \prec \sigma' $, so by our inductive hypothesis, $ G_{-i, n}(\sigma) < G_{-i, n}(\sigma') $ and $ G_{i, n + 1}(\lan q \ran\sigma) = f_{i, q}(G_{-i, n}(\sigma)) < f_{i, q}(G_{-i, n}(\sigma')) = G_{i, n+1}(\lan q \ran \sigma') $.
\end{proof}

\begin{prop}
\label{prop: Front(T) is L}
    The corestriction $ G_i : \Front(T_i) \to L_i \setminus A_{i, \omega} $ is an order-isomorphism.
\end{prop}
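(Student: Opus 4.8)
The plan is to prove the statement in two steps --- that $G_i$ is a bijection of sets onto $L_i\setminus A_{i,\omega}$, and that it is monotone --- and then invoke the fact that an order-preserving bijection between linear orders is automatically an isomorphism, its inverse being order-preserving because the orders are total.

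For the bijectivity I would simply assemble the layer maps. By the preceding proposition together with the monotonicity of $G_{i,n}$ recorded after its definition, each $G_{i,n}$ is a bijection $\layer(T_i,n)\to A_{i,n}\setminus A_{i,n+1}$. Since $\front(T_i)=\bigsqcup_{n\in\N}\layer(T_i,n)$ and, by Remark~\ref{rem: partition}, $L_i\setminus A_{i,\omega}=\bigsqcup_{n\in\N}(A_{i,n}\setminus A_{i,n+1})$, the map $G_i$ is the disjoint union of these bijections and hence a bijection onto $L_i\setminus A_{i,\omega}$.

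For monotonicity, fix distinct leaves with $\sigma\prec\tau$ and let $\rho$ be their longest common prefix; since $\sigma\prec\tau$ forces $\sigma$ and $\tau$ to be $\sqsubseteq$-incomparable, $\rho$ is a proper prefix of each, and unwinding the definition of $\prec$ gives $\rho\lan q\ran\sqsubseteq\sigma$, $\rho\lan q'\ran\sqsubseteq\tau$ with $q<q'$ in $\ol\Q$. As $\rho$ is not a leaf we have $c_i(\rho)\neq\mbf0$, so $c_i(\rho)\in F_\plus\cup F_\minus$ or $c_i(\rho)\in\{s_\plus,s_\minus\}$. In the first case Remark~\ref{rem: 1} forces $\sigma=\rho\lan q\ran$ and $\tau=\rho\lan q'\ran$ to lie in one layer $\layer(T_i,\abs\sigma-2)$, where we conclude from the already-noted monotonicity of $G_{i,\abs\sigma-2}$. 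In the second case $\rho\in R^{<\omega}$ by Remark~\ref{rem: 2}, say $\rho\in R^{n}$; writing $\sigma=\rho\sigma'$ and $\tau=\rho\tau'$, applying Equation~\eqref{eqn: Gin inductive} $n$ times to peel off the (rational) coordinates of $\rho$ together with the identity $f_{i,\rho}=f_{i,\rho(0)}\circ\cdots\circ f_{(-1)^{n-1}i,\rho(n-1)}$, one gets $G_i(\sigma)=f_{i,\rho}\big(G_j(\sigma')\big)$ and $G_i(\tau)=f_{i,\rho}\big(G_j(\tau')\big)$, where $j=(-1)^{n}i$ and $\sigma',\tau'$ are leaves of $T_j$ first differing at coordinate $0$. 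Since $f_{i,\rho}$ is monotone it suffices to compare $G_j(\sigma')$ with $G_j(\tau')$; as these tails begin at a child of the root of $T_j$, their initial coordinates $q,q'$ are genuine rationals (below a root only clause~(2) of Definition~\ref{defn: tree} applies, not $\pm\infty$). Finally, a two-case inspection of $G_j$ --- the base formula $G_{j,0}(\cdot)=(q_0,h_j^{-1}(q_1))$ for leaves of length $2$, and Equation~\eqref{eqn: Gin inductive} otherwise --- shows that $G_j$ sends every leaf into the summand of $L_j=\sum_{q''\in\Q}\chi_j(q'')$ indexed by that leaf's first coordinate; hence $G_j(\sigma')$ lies in the $q$-summand and $G_j(\tau')$ in the $q'$-summand, so $q<q'$ gives $G_j(\sigma')<G_j(\tau')$, as needed.

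The $s$-colored case carries all the difficulty. The two points requiring care are: (i) the peeling identity, a routine induction on $n$ but one that must be matched against the indexing conventions for $f_{i,\rho}$ and against the identification of which tree $T_j$ the truncated leaf lives in; and (ii) the observation that, once the common $R$-prefix is stripped, the first differing coordinate is always a rational and never a $\pm\infty$ arising from clauses~(3)--(4) of Definition~\ref{defn: tree} --- this is exactly what reduces the comparison to two distinct positions in an ordered sum. The $F$-colored case and the bijectivity step are bookkeeping from results already in hand.
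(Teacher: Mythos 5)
Your bijectivity argument is exactly the paper's, and your monotonicity strategy --- pass to the longest common prefix $\rho$, dispose of the case $c_i(\rho)\in F_\plus\cup F_\minus$ via the layer maps, and otherwise peel off the common $R$-prefix and compare the first differing coordinates as indices of summands --- is essentially the paper's induction run all at once. The bijectivity step, the $F$-colored case, the peeling identity, and the summand comparison when both first differing coordinates are rational are all fine.

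The gap is your point (ii): it is \emph{not} true that, once the common $R$-prefix is stripped, the first differing coordinate is always rational. The identification of the subtree below $\rho\in R^n$ with $T_j$ in Remark~\ref{rem: biembeddable trees} covers only the nodes $\rho\sigma$ with $\sigma(0)\in\Q$; since $\rho\neq\lan\ran$ when $n\ge1$, clauses (3)--(4) of Definition~\ref{defn: tree} also attach to $\rho$ the children $\rho\lan-\infty\ran$ and $\rho\lan\infty\ran$ (colored $\alpha_j^0$, $\delta_j^0$) whenever these are nonzero, and these have no counterpart below the root of $T_j$. Concretely, if $\rho\in R^n$ with $n\ge 1$ and $\alpha_j^0\neq\mbf 0$, then $\sigma=\rho\lan-\infty,q_1\ran$ and $\tau=\rho\lan q'\ran\tau'$ with $q'\in\Q$ are leaves with $\sigma\prec\tau$, longest common prefix $\rho$, and first differing coordinates $-\infty<q'$; here $\lan-\infty,q_1\ran$ is not a leaf of $T_j$, Equation~\eqref{eqn: Gin inductive} does not peel $\sigma$ down to such a tail, and there is no ``$-\infty$-summand'' of $\sum_{q''\in\Q}\chi_j(q'')$ in which to place its image. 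Comparing such a leaf with one that descends into the embedded copy requires the convexity decomposition $\alpha^0\ll f(L)\ll\delta^0$ of the chosen embeddings rather than the ordering of $\Q$-indexed summands; this is precisely what cases (2)--(4) of the paper's proof supply, and it cannot be avoided in any nontrivial instance of the theorem, since if all of $\alpha_\plus^0,\delta_\plus^0,\alpha_\minus^0,\delta_\minus^0$ were zero the maps $f_i$ would already be surjective.
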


\begin{proof}
        Thanks to the above proposition, Remark~\ref{rem: partition} and the injectivity of $G_{i,n}$, it is easy to see that the said corestriction of $ G_i $ is bijective. It remains only to show that $ G_i $ is order-preserving. 
        
        Let $ \sigma, \tau \in \Front(T_i) $ satisfy $ \sigma \prec \tau $. Induct on $n$, where $\sigma \in \layer(T_i,k_1)$ and $\tau \in \layer(T_i,k_2)$ for $k_1,k_2 \leq n$. If $|\sigma|=|\tau|$, then we are done by the monotonicity of $ G_{i,|\sigma|-2} $. Thus we assume that $|\sigma|\neq|\tau|$.
        
        Furthermore, by the induction hypothesis for any $ j \in \{\plus, \minus\} $ and any $ \sigma_0, \tau_0 \in \Front(T_j) $ with $ \sigma_0 \prec \tau_0, \abs{\sigma_0} < \abs{\sigma} $ and $ \abs{\tau_0} < \abs{\tau} $, we have $ G_{j}(\sigma_0) < G_{j}(\tau_0) $.

        Let $ \rho \in \Q^{< \omega}, \sigma', \tau' \in \ol{\Q}^{< \omega} $ and $ q, q' \in \ol{\Q} $ be such that $ \sigma = \rho \lan q \ran \sigma' $, $ \tau = \rho \lan q' \ran \tau' $ and $ q < q' $.

        There are five possibilities.
        
        \begin{enumerate}
            \item If $ \rho = \lan \ran $ and $ q < q'$, then $ G_i(\lan q \ran \sigma') \in \{q\} \times \chi_i(q) \ll \{q'\} \times \chi_i(q') \ni G_i(\lan q' \ran \tau') $.
            \item If $ \rho = \lan q_0 \ran $ for some $ q_0 \in \Q $ with $ c_i(\lan q_0 \ran) = s_i $, $ q = -\infty $ and $ q' \in \Q $. In this case, $ \sigma' = \lan q_1 \ran $ {for some $ q_1 \in \Q $ with $ h_i^{-1}(q_1) \in c_i(q) = c_i(-\infty) = \alpha_{i}^0 $} and $ G_i(\sigma) = G_i(\lan q_0, -\infty, q_1 \ran) = (q_0, {h_i^{-1}(q_1)}) $, while $ G_i(\tau) = G_i(\lan q_0, q' \ran \tau') = f_{i, q_0} (G_{-i}(\lan q' \ran \tau')) = (q_0, f_i(G_{-i}(\lan q' \ran \tau'))) $.
            Since {$ h_i^{-1}(q_1) \in \alpha^0_i \ll f_i(L_{i}) \ni f_i(G_{-i}(\lan q' \ran \sigma')) $}, we get $ G_i(\sigma) < G_i(\tau) $ as required.
            \item If $ \rho = \lan q_0 \ran $ for some $ q_0 \in \Q $ with $ c_i(\lan q_0 \ran) = s_i $, $ q' = \infty $ and $ q \in \Q $, then the order is preserved by an argument dual to the above case.
            \item If $ \rho = \lan q_0 \ran $ for some $ q_0 \in \Q $ with $ c_i(\lan q_0 \ran) = s_i $, $ q = -\infty $ and $ q'=\infty$. Then, there exist $q'' \in \Q$ with $q<q''<q'$. Then, the conclusion follows together from the Cases (2) and (3) above. 
            
            \item If none of the above is the case, then we are in the inductive step and  $ \rho = \lan q_0 \ran \rho' $ for some nonempty $ \rho' \in \Q^{< \omega} $ with $ \rho'\lan q \ran \sigma', \rho'\lan q' \ran \tau' \in \layer(T_{-i}, n - 1) $ satisfying $ \abs{\rho' \lan q \ran \sigma'} < \abs{\sigma}, \abs{\rho' \lan q' \ran \tau'} < \abs{\tau} $ and $ \rho' \lan q \ran \sigma' \prec \rho' \lan q' \ran \tau' $. Hence, by the inductive hypothesis we have $ G_{-i}(\rho' \lan q \ran \sigma') < G_{-i}(\rho' \lan q' \ran \tau') $. Finally, by the monotonicity of $f_{i,q_0}$ we get $ G_{i}(\sigma) = f_{i, q_0} (G_{-i}(\rho' \lan q \ran \sigma')) < f_{i, q_0}(G_{-i}(\rho' \lan q \ran \tau')) = G_i(\tau) $ as required.
        \end{enumerate}        
\end{proof}

\subsection{\texorpdfstring{$ \Front(T_i) \cong \Xi(F_\plus \cup F_\minus ) $}{Front(T_i) ≅ Ξ(F_+ ∪ F_-)}}
\label{S: 4.2}




\begin{prop}\label{prop: axil iso}
    We have $(\Front (T_i'), c_i \res \Front(T_i') : \Front(T_i') \to F_\plus \cup F_\minus) \cong (\Q, \varphi:  \Q \to F_\plus \cup F_\minus)$, where $ \varphi $ is a dense coloring.
\end{prop}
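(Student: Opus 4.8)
The plan is to recognise the colored linear order $(\Front(T_i'),\,c_i\res\front(T_i'))$ as a copy of $\Q$ carrying a dense coloring by $F_\plus\cup F_\minus$, and then to invoke the uniqueness clause of Theorem~\ref{thm: cantor}. Concretely, I would check three things: (a) $\front(T_i')$ is countable; (b) $(\front(T_i'),\prec)$ has neither a least nor a greatest element; (c) $c_i\res\front(T_i')$ is a dense $(F_\plus\cup F_\minus)$-coloring in the sense of Definition~\ref{defn: coloring} --- note that (c) already forces $(\front(T_i'),\prec)$ to be densely ordered. Granting (a)--(c), $\Front(T_i')$ is a countable dense linear order without endpoints, hence order-isomorphic to $\Q$ by Cantor's classical characterisation of the rationals; transporting $c_i\res\front(T_i')$ along such an isomorphism yields a dense coloring of $\Q$ by $F_\plus\cup F_\minus$, and Theorem~\ref{thm: cantor} then provides an order automorphism of $\Q$ intertwining it with $\varphi$, which completes the proof. (As usual, we may assume that $S^0_\plus$ and $S^0_\minus$ are nonempty --- equivalently that $F_\plus\cup F_\minus\neq\emptyset$ --- since otherwise $L_\plus\cong L_\minus$ trivially.)

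Item (a) is immediate since $\front(T_i')\subseteq\ol\Q^{<\omega}$. The substance lies in the following strengthening of Remark~\ref{rem: biembeddable trees}, which I would prove by unwinding Definition~\ref{defn: tree} and the density of $\chi_\plus,\chi_\minus$: \emph{for every node $\eta$ of $T_i$ colored by $s_\plus$ or $s_\minus$ and every $t\in F_\plus\cup F_\minus$, there is $\mu\in\front(T_i')$ with $\eta\sqsubseteq\mu$ and $c_i(\mu)=t$.} The point is that a node colored $s_j$ already sees, among its immediate children, leaf-parents that realise one of the two ``halves'' $F_\plus,F_\minus$ in full --- its rational children colored from $S^0_\plus\cup S^0_\minus$ exhaust one such set by density of $\chi_\plus,\chi_\minus$, and its two $\pm\infty$-children supply the matching $\alpha^0$ and $\delta^0$ --- while descending one further level, into a rational child colored by $s_\plus$ or $s_\minus$ and applying Remark~\ref{rem: biembeddable trees}, realises the other half; and $F_\plus\cup F_\minus$ is the union of these two halves.

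Granting this lemma, (b) and (c) are short. For (c): given $\sigma\prec\tau$ in $\front(T_i')$ and a target $t\in F_\plus\cup F_\minus$, choose (by the definition of $\prec$) a branch node $\rho$ with $\rho\lan q\ran\sqsubseteq\sigma$, $\rho\lan q'\ran\sqsubseteq\tau$ and $q<q'$ in $\ol\Q$. Then $\rho$ is colored by $s_\plus$ or $s_\minus$: it is not colored $\mbf 0$, since such nodes are leaves, and it is not colored by an element of $F_\plus\cup F_\minus$, since by Remark~\ref{rem: 1} such a node lies in $\front(T_i')$ with only leaf children, whereas $\rho$ has the child $\rho\lan q\ran$, a proper prefix of the non-leaf $\sigma$. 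The interval $(q,q')\subseteq\ol\Q$ contains infinitely many rationals, and $R=\chi_j^{-1}(s_j)$ is dense in $\Q$, so we may choose $q''\in R\cap(q,q')$; then $\rho\lan q''\ran$ is again colored by $s_\plus$ or $s_\minus$, and the lemma yields $\nu\in\front(T_i')$ with $\rho\lan q''\ran\sqsubseteq\nu$ and $c_i(\nu)=t$, so $\sigma\prec\nu\prec\tau$ because $q<q''<q'$. For (b): any $\sigma\in\front(T_i')$ extends a rational child $\lan q_0\ran$ of the root (colored $s_i$); choosing a rational $q_0^-<q_0$ for which $\lan q_0^-\ran$ is itself a leaf-parent, or --- if $\lan q_0^-\ran$ happens to be colored by some $s_j$ --- applying the lemma below it, produces an element of $\front(T_i')$ that is $\prec\sigma$; the greatest-element case is symmetric.

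The hard part is the lemma, and within it the assertion that \emph{all} of $F_\plus\cup F_\minus$ --- not merely the ``half'' $F_i$ visibly attached to the root color $s_i$ --- occurs densely along $\front(T_i')$; this is exactly where the back-and-forth structure of $T_i$, namely the colored copies of $T_{-i}$ beneath the $R$-nodes furnished by Remark~\ref{rem: biembeddable trees}, is indispensable. The remainder is routine bookkeeping about prefixes in $\ol\Q^{<\omega}$, apart from a few degenerate sub-cases --- a vanishing $\alpha^0_j$ or $\delta^0_j$, or an empty shuffland set --- which either remove the color in question from $F_\plus\cup F_\minus$ or reduce to a trivial instance of Theorem~\ref{thm: main theorem}.
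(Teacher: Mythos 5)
Your proposal is correct and takes essentially the same route as the paper: both verify that $\Front(T_i')$ is countable, has no endpoints, and carries a dense $(F_\plus\cup F_\minus)$-coloring via the density of $\chi_\plus,\chi_\minus$ and the alternating copies of $T_{\minus i}$ below $R$-nodes, and then conclude by Theorem~\ref{thm: cantor}. The only organizational difference is that you isolate the density step as a standalone lemma (every color occurs below every $s_\plus$- or $s_\minus$-colored node, reached through an $R$-child at a position in $(q,q')$), whereas the paper realizes the target color directly at a child or grandchild of the branching node at a rational position in $(q,q')$ --- the same mechanism.
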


\begin{proof}
Since $ c_{i}^{-1}(S^0_i) \subseteq\front(T_i') \subseteq\Q^{< \omega}$ by Remark \ref{rem: 1}, we see that $ \front(T_i') $ is nonempty and countable. To see that $\Front(T'_i)$ doesn't have a least element, note that for any $\sigma \in \Front(T'_i)$, there exists $q \in \Q$ with $\lan q \ran \sqsubseteq \sigma$, and then the second clause of  Definition~$\ref{defn: tree}$ guarantees the existence of $q'< q \text{ in } \Q $ with $c_i(\lan q' \ran) \in S^0_i$, so that $ \lan q' \ran \in \Front(T_i') $ with $ \lan q' \ran \prec \sigma $. A dual argument shows that $\Front(T'_i)$ has no greatest element.

Suppose $ \sigma, \tau \in \Front(T_i') $, $ L \in F_\plus \cup F_\minus $ satisfy $ \sigma \prec \tau $. We will show that $ c_i(\zeta) = L $ for some $ \zeta \in \Front(T_i') $ with $ \sigma \prec \zeta \prec \tau $. Since $\sigma \prec \tau$, there are $ \rho \in T_i \text{ and } q, q' \in \ol{\Q} $ satisfying $ \rho \lan q \ran \sqsubseteq \sigma $, $ \rho \lan q' \ran \sqsubseteq \tau $ and $ q < q' $. Then $\rho \in T_i' \setminus \front(T_i') = R^{< \omega} $, and thus $ c_i(\rho) \in \{s_\plus, s_\minus\} $, thanks to Remarks \ref{rem: 1} and \ref{rem: 2}.

If $ L \in S^0_j $ for some $ j \in  \{\plus, \minus\} $, it is possible to choose $p, p' \in \Q $ such that $q < p < q'$ and either $c_i(\tau \lan p \ran) = L$, or $ c_i(\tau \lan p \ran) = s_{-j} $ and $ c_i(\tau \lan p,p'\ran) = L$. Otherwise, $ L \in \{ \alpha^0_j, \delta^0_j \}$ for some $ j \in \{\plus, \minus\} $. In this case, it possible to choose $ p, p' \in \Q $ with $q < p < q'$ such that either $c_i (\tau \lan p \ran) = s_j$, or $ c_i (\tau \lan p \ran) = s_{-j}$ and $c_i (\tau \lan p,p' \ran) = s_j$. Since $ L \not= \mbf 0 $, one of $c_i (\tau \lan p, -\infty \ran)$, $c_i (\tau \lan p,p',-\infty \ran) $, $c_i (\tau \lan p, \infty \ran) $ or $c_i (\tau \lan p,p',\infty \ran)$ equals $ L $. 

We have proved that $\Front(T_i')$ is non-empty, countable, dense, unbounded and $c_i \res \Front(T_i')$ is a dense $ (F_\plus \cup F_\minus) $-coloring. The required isomorphism now follows from Theorem \ref{thm: cantor}.
\end{proof}

\begin{cor}
\label{prop: Front(T) is shuffle}
    $ \Front(T_i) \cong \sum_{r \in \Q} \varphi(r) \cong \Xi(F_\plus \cup F_\minus)) $
\end{cor}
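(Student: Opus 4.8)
The plan is to assemble the corollary from the two results just proved. By Proposition~\ref{prop: axil iso}, the colored linear order $(\Front(T_i'), c_i\res\Front(T_i'))$ is isomorphic to $(\Q, \varphi)$ for a dense coloring $\varphi : \Q \to F_\plus \cup F_\minus$, and by the very last sentence of Definition~\ref{defn: shuffle}, a shuffle $\Xi(F_\plus \cup F_\minus)$ is by definition $\sum_{r \in \Q}\varphi(r)$ for such a dense coloring; hence the second isomorphism $\sum_{r\in\Q}\varphi(r) \cong \Xi(F_\plus\cup F_\minus)$ is immediate (using the uniqueness part of Theorem~\ref{thm: cantor} to see it does not matter which dense coloring we picked). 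So the real content is the first isomorphism $\Front(T_i) \cong \sum_{r\in\Q}\varphi(r)$.

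To get that, I would observe that $\Front(T_i)$ was obtained (Proposition~\ref{prop: Front(T) is L} and the propositions preceding it) to be order-isomorphic to $L_i\setminus A_{i,\omega}$, but that route introduces $A_{i,\omega}$ and is not what we want here. Instead the cleaner argument is structural: each leaf node $\sigma \in \front(T_i)$ has a unique parent $p(\sigma) \in \front(T_i')$ (Remark~\ref{rem: 1}), and by Definition~\ref{defn: tree}, clause~(1), the children of a fixed $\tau \in \front(T_i')$ are exactly the nodes $\tau\lan q\ran$ with $q \in h_i(c_i(\tau))$, each colored $\mbf 0$. Thus $\front(T_i) = \bigsqcup_{\tau \in \front(T_i')} \{\tau\lan q\ran \mid q \in h_i(c_i(\tau))\}$, and under $\prec$ this exhibits $\Front(T_i)$ as the ordered sum $\sum_{\tau \in \Front(T_i')} (h_i(c_i(\tau)), <)$ indexed along $\Front(T_i')$: the nodes below $\tau$ are $\prec$-ordered among themselves exactly as $q$ ranges over the suborder $h_i(c_i(\tau)) \subseteq \Q$, and nodes below $\prec$-incomparable parents inherit the order of their parents. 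Since $h_i$ restricts to an order isomorphism $c_i(\tau) \xrightarrow{\ \sim\ } h_i(c_i(\tau))$ for each $\tau$ (noted right after the definition of $h_i$ in Section~\ref{S: Constructing}), we get $\Front(T_i) \cong \sum_{\tau \in \Front(T_i')} c_i(\tau)$, which, transported along the isomorphism of Proposition~\ref{prop: axil iso}, is precisely $\sum_{r \in \Q}\varphi(r)$.

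The step I expect to require the most care is justifying that the $\prec$-order on $\front(T_i)$ really does decompose as this ordered sum along $\Front(T_i')$ — i.e.\ that for leaves $\sigma, \sigma'$ with parents $\tau = p(\sigma)$, $\tau' = p(\sigma')$, one has $\sigma \prec \sigma'$ iff either $\tau \prec \tau'$, or $\tau = \tau'$ and the last coordinates satisfy $q < q'$. This is a routine unwinding of the definition of $\prec$ on $\ol\Q^{<\omega}$ together with the observation that distinct leaves sharing a parent differ only in their last coordinate while leaves with distinct parents have their parents as a common prefix of the relevant truncations; but it must be stated cleanly. Everything else — countability, that $h_i(c_i(\tau))$ is the right suborder of $\Q$, and the appeal to Proposition~\ref{prop: axil iso} — is bookkeeping. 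I would therefore present the corollary's proof as: ``$\front(T_i)$ is the disjoint union over $\tau \in \front(T_i')$ of its $\tau$-children, this disjoint union is order-isomorphic (via $h_i$ on each block) to $\sum_{\tau\in\Front(T_i')} c_i(\tau)$, and Proposition~\ref{prop: axil iso} identifies the latter with $\sum_{r\in\Q}\varphi(r) \cong \Xi(F_\plus\cup F_\minus)$.''
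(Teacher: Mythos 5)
Your proposal is correct and follows essentially the same route as the paper: the paper likewise writes each leaf of $T_i$ uniquely as $\tau\lan q\ran$ with $\tau\in\front(T_i')$, observes that $\prec$ on $\front(T_i)$ is lexicographic in the parent and the last coordinate, and then combines the colored isomorphism $H'$ of Proposition~\ref{prop: axil iso} with the maps $h_{\pm}$ blockwise to produce the isomorphism onto $\Xi(F_\plus\cup F_\minus)$. Your extra care about which of $h_\plus,h_\minus$ applies at a given depth (the parity of $\abs{\tau}$) matches the paper's use of $h_{(-1)^{\abs{\tau}-1}i}$, so there is nothing to add.
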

\begin{proof}
    For any $\sigma \in \Front(T_i)$, there exist $\tau \in T_i$ and $q \in \Q$ with $c_i(\tau) \in F_\plus \cup F_\minus $, $q \in c_i(\tau)$ and $\tau \lan q \ran = \sigma$.
    Thus if $\sigma_1 \prec \sigma_2$ in $\Front(T_i)$ with $\sigma_1 = \tau_1 \lan q_1 \ran$ and $\sigma_2 = \tau_2 \lan q_2 \ran$, we have either $\tau_1 \prec \tau_2$ or $\tau_1 = \tau_2$ and $q_1 < q_2$. Using Proposition \ref{prop: axil iso}, fix a colored order isomorphism $ H' : (\Front(T_i'), c_i: \Front(T_i') \to F_\plus \cup F_\minus) \to (\Q, \varphi) $. Then the map $ H : \Front(T_i) \to \Xi(F_\plus \cup F_\minus) $ defined by $ H(\tau\lan q \ran) = (H'(\tau), h_{(-1)^{\abs{\tau}-1}i}(q)) $ is an order isomorphism. 
\end{proof}

Proposition~\ref{prop: Front(T) is L} and Corollary \ref{prop: Front(T) is shuffle} together imply the following.

\begin{cor}
\label{cor: without Aiw}
    $ L_i \setminus A_{i, \omega} \cong \Xi(F_\plus \cup F_\minus) $.
\end{cor}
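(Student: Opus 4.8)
The plan is to combine the two isomorphisms already established. From Proposition~\ref{prop: Front(T) is L} we have an order isomorphism $G_i : \Front(T_i) \to L_i \setminus A_{i, \omega}$, and from Corollary~\ref{prop: Front(T) is shuffle} we have $\Front(T_i) \cong \Xi(F_\plus \cup F_\minus)$. Composing these immediately yields $L_i \setminus A_{i, \omega} \cong \Xi(F_\plus \cup F_\minus)$. So the statement is essentially a one-line corollary of the two preceding results, and no new ideas are required.

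First I would invoke Proposition~\ref{prop: Front(T) is L}, which asserts that the corestriction $G_i : \Front(T_i) \to L_i \setminus A_{i, \omega}$ is an order isomorphism. Then I would invoke Corollary~\ref{prop: Front(T) is shuffle}, which gives $\Front(T_i) \cong \Xi(F_\plus \cup F_\minus)$. Since isomorphism of linear orders is transitive, chaining the two gives $L_i \setminus A_{i, \omega} \cong \Front(T_i) \cong \Xi(F_\plus \cup F_\minus)$, which is exactly the claim. There are no case distinctions, no induction, and no appeal to the shuffle-absorption properties of Proposition~\ref{prop: property_shuffle} needed at this stage — those were already used inside Corollary~\ref{prop: Front(T) is shuffle}.

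There is essentially no obstacle here; the only thing worth a moment's care is making sure the two isomorphisms are composable as bare order isomorphisms (ignoring colorings), which they are, since both $G_i$ and the isomorphism from Corollary~\ref{prop: Front(T) is shuffle} are isomorphisms of the underlying linear orders. One might note in passing that the result holds uniformly in $i \in \{\plus, \minus\}$ and that the right-hand side $\Xi(F_\plus \cup F_\minus)$ does not depend on $i$ — this symmetry is the whole point, since it is what will later be leveraged (together with an analogous treatment of $A_{i, \omega}$ via the closed frontier $\ol\Front(T_i)$) to conclude $L_\plus \cong L_\minus$ in the proof of Theorem~\ref{thm: main theorem}. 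Accordingly, the proof is simply: "Proposition~\ref{prop: Front(T) is L} and Corollary~\ref{prop: Front(T) is shuffle} together imply the claim."
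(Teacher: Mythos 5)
Your proposal is correct and matches the paper exactly: the paper states this corollary as an immediate consequence of Proposition~\ref{prop: Front(T) is L} and Corollary~\ref{prop: Front(T) is shuffle}, composed via transitivity of order isomorphism. No further comment is needed.
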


\subsection{\texorpdfstring{$L_+ \cong L_- $}{L_+ ≅ L_-}}
\label{subsection: final argument}


    



\begin{prop}
    For any $ \sigma \in R^{n} $, $ r\in R^\omega $ with $ \sigma \prec r $, $ A_{i, n}^\sigma \ll A_{i, \omega}^r $.
\end{prop}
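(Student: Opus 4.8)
The plan is to reduce the assertion to a single ``one-step'' inequality between the ranges of two of the basic embeddings $f_{j,p}$, using the decreasing nature of the sets $A^{\bullet}_{i,m}$ recorded in Remark~\ref{rem: decreasing}.

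First I would unpack the hypothesis $\sigma\prec r$. Since $\sigma$ is a finite sequence and $r$ is infinite, $r\not\sqsubseteq\sigma$ holds automatically, and $\prec$-comparability forces $\sigma\not\sqsubseteq r$; hence $\sigma$ and $r$ share a common prefix $\rho:=\sigma\res k=r\res k$ for some $k<n$ and then disagree at index $k$, with $q:=\sigma(k)<r(k)=:q'$ and $q,q'\in R$ (as $\sigma\in R^n$, $r\in R^\omega$). Iterating the inclusion $A_{i,m+1}^{\tau\res(m+1)}\subseteq A_{i,m}^{\tau\res m}$ from Remark~\ref{rem: decreasing} along $\sigma$ (which is legitimate because $k+1\le n$), and noting $r\res(k+1)=\rho\lan q'\ran$, one gets
$$ A_{i,n}^{\sigma}\ \subseteq\ A_{i,k+1}^{\rho\lan q\ran} \qquad\text{and}\qquad A_{i,\omega}^{r}\ =\ \bigcap_{m\in\N}A_{i,m}^{r\res m}\ \subseteq\ A_{i,k+1}^{\rho\lan q'\ran}. $$
Since $A\ll B$ whenever $A\subseteq A'$, $B\subseteq B'$ and $A'\ll B'$, it now suffices to prove $A_{i,k+1}^{\rho\lan q\ran}\ll A_{i,k+1}^{\rho\lan q'\ran}$.

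For this, recall from Remark~\ref{rem: decreasing} that for $p\in\{q,q'\}$ we have $A_{i,k+1}^{\rho\lan p\ran}=f_{-i,\rho}\bigl(f_{(-1)^{k+1}i,\,p}(L_{(-1)^{k+1}i})\bigr)$. The map $f_{-i,\rho}$ is a composition of maps of the form $f_{j,p}$, hence monotone and injective (injectivity is noted in the text), and a monotone injection preserves the relation $\ll$; so it is enough to check $f_{(-1)^{k+1}i,\,q}(L_{(-1)^{k+1}i})\ll f_{(-1)^{k+1}i,\,q'}(L_{(-1)^{k+1}i})$. But by the very definition $f_{j,p}(x)=(p,f_j(x))$, every element of $f_{j,q}(L_j)$ has first coordinate $q$ and every element of $f_{j,q'}(L_j)$ has first coordinate $q'$ in the ordered-sum presentation of $L_{-j}$; since $q<q'$, the definition of the ordered sum order gives the desired $\ll$ immediately. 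Chaining everything, $A_{i,n}^{\sigma}\subseteq A_{i,k+1}^{\rho\lan q\ran}\ll A_{i,k+1}^{\rho\lan q'\ran}\supseteq A_{i,\omega}^{r}$, so $A_{i,n}^{\sigma}\ll A_{i,\omega}^{r}$, as claimed.

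I do not expect a genuine obstacle here: the only points needing a little care are the index bookkeeping (that $k<n$, which is exactly where finiteness of $\sigma$ versus infiniteness of $r$ enters, and the alternation of signs, handled uniformly by writing $(-1)^{k+1}i$) and the observation that monotone injections preserve $\ll$. If one prefers, the monotonicity step can be made fully explicit by unfolding: any $x\in A_{i,n}^{\sigma}$ and $y\in A_{i,\omega}^{r}$ can be written as $x=f_{-i,\rho}\bigl(f_{(-1)^{k+1}i,q}(a)\bigr)$ and $y=f_{-i,\rho}\bigl(f_{(-1)^{k+1}i,q'}(b)\bigr)$, and comparing first coordinates before applying $f_{-i,\rho}$ yields $x<y$.
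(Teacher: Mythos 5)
Your proposal is correct and follows essentially the same route as the paper's proof: extract the common prefix $\rho$ and the first point of disagreement $q<q'$, use Remark~\ref{rem: decreasing} to squeeze $A_{i,n}^\sigma$ and $A_{i,\omega}^r$ into $\rng(f_{-i,\rho\lan q\ran})$ and $\rng(f_{-i,\rho\lan q'\ran})$ respectively, and compare these via the first coordinate under the monotone injection $f_{-i,\rho}$. Your version merely makes explicit the bookkeeping ($k<n$, the sign alternation) and the fact that monotone injections preserve $\ll$, which the paper leaves implicit.
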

\begin{proof}
    Since $ \sigma \prec r $, there exist $ \rho \in R^{< \omega} $, $ q, q' \in R $, such that $ \rho \lan q \ran \sqsubseteq \sigma $, $ \rho \lan q' \ran \sqsubseteq r $ and $ q < q' $. Then Remark \ref{rem: decreasing} yields $ A_{i, n}^\sigma = \rng(f_{-i, \sigma}) \subseteq \rng(f_{-i, \rho\lan q \ran}) = f_{-i, \rho}(f_{(-1)^{n+1}i, q}(L_i)) \ll f_{-i, \rho}(f_{(-1)^{n+1}i, q'}(L_i)) = \rng(f_{-i, \rho \lan q' \ran}) = A_{i, \abs{\rho \lan q' \ran }}^{\rho \lan q' \ran} \supseteq  A_{i, \omega}^r $.  
\end{proof}
\begin{prop}
    Suppose $ r_0, r_1 \in R^{\omega} $ differ at only finitely many places. Then $ A_{i, \omega}^{r_0} \cong A_{i, \omega}^{r_1} $.
\end{prop}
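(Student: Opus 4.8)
The plan is to push the isomorphism $F_{\sigma_0}^{\sigma_1}$ of Remark~\ref{rem: stability} down to the intersections that define $A_{i,\omega}^{r_0}$ and $A_{i,\omega}^{r_1}$. Since $r_0$ and $r_1$ agree on all but finitely many coordinates, I would first fix $N \in \N$ with $r_0(k) = r_1(k)$ for every $k \ge N$, set $\sigma_0 := r_0 \res N$ and $\sigma_1 := r_1 \res N$ (both in $R^N$), and for each $m \in \N$ let $\rho_m \in R^m$ be the common block $\lan r_0(N), \dots, r_0(N + m - 1)\ran = \lan r_1(N), \dots, r_1(N + m - 1)\ran$, so that $r_0 \res (N + m) = \sigma_0 \rho_m$ and $r_1 \res (N + m) = \sigma_1 \rho_m$.

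Next I would consider the order isomorphism $F := F_{\sigma_0}^{\sigma_1} : A_{i, N}^{\sigma_0} \to A_{i, N}^{\sigma_1}$. By Remark~\ref{rem: stability}, for every $m$ the isomorphism $F_{\sigma_0 \rho_m}^{\sigma_1 \rho_m} : A_{i, N + m}^{\sigma_0 \rho_m} \to A_{i, N + m}^{\sigma_1 \rho_m}$ is precisely the restriction of $F$ to $A_{i, N + m}^{\sigma_0 \rho_m}$; in particular $F$ carries $A_{i, N + m}^{\sigma_0 \rho_m}$ bijectively onto $A_{i, N + m}^{\sigma_1 \rho_m}$. Using Remark~\ref{rem: decreasing}, the sequences defining $A_{i,\omega}^{r_0}$ and $A_{i,\omega}^{r_1}$ are $\subseteq$-decreasing, so these intersections may be computed along the cofinal index set $\{N + m \mid m \in \N\}$: $A_{i,\omega}^{r_0} = \bigcap_{m} A_{i, N + m}^{\sigma_0 \rho_m}$ and $A_{i,\omega}^{r_1} = \bigcap_{m} A_{i, N + m}^{\sigma_1 \rho_m}$. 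In particular $A_{i,\omega}^{r_0} \subseteq A_{i,N}^{\sigma_0}$, so $F$ is defined on it, and since $F$ is injective, $F\bigl(\bigcap_m A_{i, N + m}^{\sigma_0 \rho_m}\bigr) = \bigcap_m F\bigl(A_{i, N + m}^{\sigma_0 \rho_m}\bigr) = \bigcap_m A_{i, N + m}^{\sigma_1 \rho_m}$. Hence $F$ restricts to a bijection $A_{i,\omega}^{r_0} \to A_{i,\omega}^{r_1}$, which is an order isomorphism because $F$ is one.

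I do not expect any serious obstacle here: the whole argument is bookkeeping around Remark~\ref{rem: stability}. The one point that needs a little care is keeping the indices straight — identifying the common tail of $r_0$ and $r_1$ past coordinate $N$ with the finite words $\rho_m$, and checking that the stability remark is being applied to the splitting $N + m$ with first part $\sigma_0$ (resp.\ $\sigma_1$) of length $N$ and second part $\rho_m$ of length $m$. (Alternatively one could reduce to the single-coordinate case by an easy induction on the number of places where $r_0$ and $r_1$ differ, but the direct argument above is just as short.)
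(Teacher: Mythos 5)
Your argument is correct and is essentially identical to the paper's own proof: both apply the isomorphism $F_{\sigma_0}^{\sigma_1}$, use Remark~\ref{rem: stability} to identify its restrictions to the sets $A_{i,N+m}^{\sigma_0\rho_m}$, use Remark~\ref{rem: decreasing} to compute $A_{i,\omega}^{r_k}$ as an intersection over the cofinal index set $\{N+m \mid m \in \N\}$, and use injectivity to commute $F_{\sigma_0}^{\sigma_1}$ past the intersection. No gaps.
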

\begin{proof}
    Suppose $ r_0, r_1 \in R^{\omega},  N \in \N $ satisfy $ r_0(n) = r_1(n) $ for every $ n \ge N $,  For each $ k \in \{0, 1\} $, let $ \sigma_k := r_k \res N $. Let $ r \in R^{\omega} $ be defined by $ r(n) := r_0(N + n) = r_1(N + n) $ for each $ n \in \N $. Also set $ \rho_n := r \res n $ for each $ n \in \N $. Consider the isomorphism $ F_{\sigma_0}^{\sigma_1} : A_{i, n}^{\sigma_0} \to A_{i, n}^{\sigma_1} $. Now it suffices to observe that $$ F_{\sigma_0}^{\sigma_1} (A_{i, \omega}^{r_0}) = F_{\sigma_0}^{\sigma_1}(\bigcap_{n \in \N} A_{i, N + n}^{\sigma_0 \rho_n}) = \bigcap_{n \in \N} (F_{\sigma_0}^{\sigma_1}(A_{i, N + n}^{\sigma_0 \rho_n})) = \bigcap_{n \in \N} F_{\sigma_0 \rho_n}^{\sigma_1 \rho_n}(A_{i, N + n}^{\sigma_0 \rho_n}) = \bigcap_{n \in \N} A_{i, N + n}^{\sigma_1 \rho_n} = A_{i, \omega}^{r_1},$$ where the first equality is a consequence of Remark~\ref{rem: decreasing} while the third follows from Remark~\ref{rem: stability}.  
\end{proof}

The above proposition motivates the following construction.

\begin{defn}
    Let $ {\sim} \subseteq R^\omega \times R^\omega $ be the equivalence relation with $ r \sim r' $ if and only if $ r(n) = r'(n) $ for all but finitely many $ n \in \N $.
    For each $ r \in R^\omega $, denote the $ \sim $-equivalence class of $ r $ as $ [r] $. Choose and fix a representative element $ \ol r \in [r] $ in each $\sim$-equivalence class. Also fix an isomorphism $ h_r : A_{i, \omega}^r \to A_{i, \omega}^{\ol r} $ for each $r \in R^\omega$. Finally, define ${F_{i,0}} := \{ A_{i, \omega}^{\ol r} \mid r \in R^\omega\}$
\end{defn}
Fix $ \lan q_{00} \ran \in R $.
\begin{rem}
\label{rem: same colors}
     For any $ r \in R^\omega $ and $ q \in R $, we have $ f_{i, q}(A_{i, \omega}^r) = A_{-i, \omega}^{\lan q \ran r}$. {The bijective assignment $ z([r]) := [\lan q_{00} \ran r] $ induces a bijection $F_{i,0} \to F_{-i,0}$ defined by $ A_{i, \omega}^{\ol r} \mapsto A_{-i, \omega}^{\ol{z(r)}} $.}
\end{rem}

\begin{prop}\label{prop: eq class density}
    For each $ r \in R^\omega $, $ [r] $ is dense in $ \ol\Front(T_i') $.
\end{prop}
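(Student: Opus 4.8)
The goal is to show that the $\sim$-equivalence class $[r] = \{ r' \in R^\omega \mid r' \text{ differs from } r \text{ at only finitely many places}\}$ is dense in the linear order $\ol\Front(T_i')$, where by Remark~\ref{rem: 2} the branches of $T_i'$ are exactly the elements of $R^\omega$, so $[r]$ is being viewed as a set of points in $\ol\Front(T_i') = (\front(T_i') \cup R^\omega, \prec)$. Concretely, I need to show that between any two $\prec$-comparable elements $\mu, \nu$ of $\ol\Front(T_i')$ with $\mu \prec \nu$ there is some $r' \in [r]$ with $\mu \prec r' \prec \nu$. First I would unwind what $\mu \prec \nu$ means: there is a common prefix $\rho \in \ol\Q^{<\omega}$ and $q < q'$ in $\ol\Q$ with $\rho\lan q\ran \sqsubseteq \mu$ and $\rho\lan q'\ran \sqsubseteq \nu$. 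The key point is that this ``splitting node'' $\rho$ lies in $R^{<\omega}$: indeed $\rho$ has two distinct children in $T_i'$ (the one starting $\lan q\ran$ and the one starting $\lan q'\ran$), and a node of $\front(T_i')$ is a leaf of $T_i'$ — actually I should argue via Remark~\ref{rem: 2}, since a node with a child whose color lies in $S_i$-or-$s_i$ territory forces $c_i(\rho) \in \{s_+, s_-\}$, hence $\rho \in R^{<\omega}$. Also $q, q'$ need not be rational, but since $\ol\Q$ is dense I can pick rationals strictly between where needed; in any case there is a rational $\tilde q$ with $q < \tilde q < q'$ unless $q, q'$ are consecutive in $\ol\Q$, which cannot happen since $\ol\Q$ is order-dense. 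Wait — $q$ could be $-\infty$ and $q'$ could be $0$, still fine; the only subtlety is $\rho$ itself might be the root, but then $c_i(\lan\ran) = s_i$ still gives $\rho = \lan\ran \in R^{<\omega}$ trivially.

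\textbf{Key steps.} Having located $\rho \in R^n$ (say) with $n = \abs{\rho}$ and a rational $\tilde q \in R$ with $q < \tilde q < q'$ — here I use that $\chi_j^{-1}(s_j) = R$ is dense in $\Q$ (Remark~\ref{rem: 2} combined with density of the coloring $\chi_j$, Theorem~\ref{thm: cantor}), so that I can in fact choose $\tilde q \in R$ strictly between $q$ and $q'$ — I would then build a branch $r' \in R^\omega$ of the form $r' := \rho\lan \tilde q\ran\, t$ where $t \in R^\omega$ is \emph{any} tail; e.g. take $t := r\res[n+1,\infty)$ shifted, or more simply a constant tail $\lan q_{00}, q_{00}, \dots\ran$. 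This $r'$ is a branch of $T_i'$ by Remark~\ref{rem: 2} since all its entries lie in $R$. By construction $\rho\lan q\ran \sqsubseteq \mu$, $\rho\lan \tilde q\ran \sqsubseteq r'$, $q < \tilde q$ gives $\mu \prec r'$; and $\rho\lan \tilde q\ran \sqsubseteq r'$, $\rho\lan q'\ran \sqsubseteq \nu$, $\tilde q < q'$ gives $r' \prec \nu$. So $\mu \prec r' \prec \nu$. The final task is to arrange $r' \sim r$, i.e. $r' \in [r]$. This is where I need to be more careful: a constant-tail $r'$ will generally \emph{not} be $\sim$-equivalent to $r$, since it differs from $r$ at infinitely many coordinates past $n+1$. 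The fix is to choose the tail of $r'$ to agree with $r$: set $r' := \rho\lan \tilde q\ran\, (r\res[n+2,\infty))$ reindexed, i.e. $r'(k) = \rho(k)$ for $k < n$, $r'(n) = \tilde q$, and $r'(k) = r(k)$ for $k > n$. Hmm, but I also need $\rho$ to be a prefix of $r$ up to coordinate $n-1$ — which need not hold, since $\mu, \nu$ are arbitrary, not related to $r$. So in general $r'$ will differ from $r$ at the first $n$ coordinates \emph{plus} at coordinate $n$, i.e. at $n+1$ many places — still finitely many! Hence $r' \sim r$, as desired. Good: the finitely-many-disagreements condition is robust precisely because only a bounded-length prefix is being modified.

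\textbf{Main obstacle.} The technical heart is the claim that the splitting node $\rho$ of any two comparable elements of $\ol\Front(T_i')$ lies in $R^{<\omega}$, and that one can find $\tilde q \in R$ strictly between the two branching values $q, q'$. The first is a structural fact about $T_i'$ that follows by combining Remarks~\ref{rem: 1} and~\ref{rem: 2} (the node $\rho$ is not a leaf of $T_i'$ since it has children in $T_i'$, forcing $c_i(\rho) \in \{s_+, s_-\}$, hence $\rho \in R^{<\omega}$ by Remark~\ref{rem: 2}); the second uses that $R$ is dense in $\Q$, which holds because $\chi_j$ is a dense coloring so $\chi_j^{-1}(s_j) = R$ meets every nonempty open rational interval. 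A minor case analysis is needed when $q$ or $q'$ is $\pm\infty$ (then one of $\mu, \nu$ may be a leaf node of $T_i$ of the $\lan\dots,\pm\infty,\dots\ran$ form, or $\rho\lan q\ran$ may itself already be a leaf of $T_i$ rather than extend to a branch), but since $\ol\Front(T_i')$ only contains nodes of $T_i'$ and branches in $R^\omega$, and $\pm\infty \notin R$, the prefixes $\rho\lan q\ran, \rho\lan q'\ran$ with $q$ or $q' = \pm\infty$ can occur only when $\mu$ or $\nu$ is a finite node; this does not obstruct the construction since we still recover $\rho \in R^{<\omega}$ and a rational $\tilde q \in R$ with $q < \tilde q < q'$. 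I expect the write-up to be short once these observations are in place; the bulk is simply verifying $\mu \prec r' \prec r'$ — sorry, $\mu \prec r' \prec \nu$ — from the definition of $\prec$, which is routine.
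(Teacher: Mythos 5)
Your proposal is correct and follows essentially the same route as the paper's proof: locate the splitting node $\rho \in R^{<\omega}$ of the two given elements, use density of $R$ in $\Q$ to pick $\tilde q \in R$ strictly between the two branching values, and form $r'$ by appending the appropriately shifted tail of $r$ after $\rho\lan\tilde q\ran$, so that $r'$ differs from $r$ in at most $\abs{\rho}+1$ coordinates and hence $r' \sim r$. Your extra care about why $\rho \in R^{<\omega}$ and about the $\pm\infty$ branching values fills in details the paper leaves implicit, but the construction is the same.
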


\begin{proof}
    It suffices to show that for every $ \sigma, \tau \in \ol{\Front}(T_i') $ with $ \sigma \prec \tau $, there exists some $ r' \in [r] $ with $ \sigma \prec r' \prec \tau $. Let $ \rho \in R^n $, $ q, q' \in \Q $ be such that $ \rho \lan q \ran \sqsubseteq \sigma $ and $ \rho \lan q' \ran \sqsubseteq \tau $. Let $ q'' \in R $ be such that $  q < q'' < q' $ and let $ r' \in R^\omega $ be given by $ r'(n) := r(\abs{\rho} + 1 + n) $. Then $ \rho \lan q'' \ran r' \sim r $ and $ q < \rho \lan q'' \ran r' < q' $.
\end{proof}

Define an extension $C_i:\ol\front(T_i')= R^\omega \cup \front(T_i') \to \ol{F_\plus} \cup \ol{F_\minus} \cup F_{i,0}$ of the coloring $ c_i\res\front(T'_i) $  by setting $ C_i(x) :=\begin{cases} c_i(x)&\mbox{ if }x\in\front(T'_i);\\
        A_{i,\omega}^{\ol x}&\mbox{ if  }x \in R^{\omega}.\end{cases}$
\begin{rem}\label{rem: colour set bijection}
 The coloring map $C_i$ is surjective and the bijection from Remark \ref{rem: same colors} extends to a bijection $ \Gamma : \rng(C_\plus) \to \rng(C_\minus) $ satisfying $ L \cong \Gamma(L) $ for each $ L \in \rng(C_\plus) $.   
\end{rem}

\begin{rem}
\label{rem: Ci}
    Propositions~\ref{prop: axil iso}, \ref{prop: eq class density} and the fact that $ [r] $ is countable for any $ r \in R^\omega $ together imply that $ \{c_i^{-1}(\sigma) \mid \sigma \in \Front(T_i')\} \cup \{[r] \mid r \in R^\omega\} $ is a partition of $ \ol \Front(T_i') $ into continuum-many countable dense $ C_i $-monochromatic suborders.
\end{rem}


    

\begin{prop}
\label{prop: L_i as an ordered sum}
    $ L_i \cong \sum_{x \in \ol \Front(T_i')} C_i(x) .$
\end{prop}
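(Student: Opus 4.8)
The strategy is to combine Corollary~\ref{cor: without Aiw}, which identifies $L_i \setminus A_{i,\omega}$ with $\Xi(F_\plus \cup F_\minus)$, with the decomposition of $A_{i,\omega}$ from Proposition~\ref{prop: Aiw partitioned into intervals}(2), and to show that these pieces fit together along $\ol\Front(T_i')$ exactly as prescribed by the coloring $C_i$. Concretely, I would first note that $\Front(T_i')$ sits inside $\ol\Front(T_i')$ as a dense suborder and its complement is $R^\omega$, the set of branches of $T_i'$ (Remark~\ref{rem: 2}); by Remark~\ref{rem: Ci} the sets $c_i^{-1}(\sigma)$ for $\sigma \in \Front(T_i')$ together with the classes $[r]$ for $r \in R^\omega$ partition $\ol\Front(T_i')$ into countable dense $C_i$-monochromatic suborders. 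The ordered sum $\sum_{x \in \ol\Front(T_i')} C_i(x)$ can therefore be regrouped: it is isomorphic to $\sum_{\sigma \in \Front(T_i')} c_i(\sigma) + \text{(contributions of the branches)}$, but since each $c_i^{-1}(\sigma)$ (being a countable dense monochromatic order with a fixed color in $F_\plus \cup F_\minus$) and each $[r]$ interleave densely, I instead want to recognize the whole sum directly.

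**Key steps.** First, I would produce an order isomorphism between $\ol\Front(T_i')$ and $\Q$ compatible with the partition of Remark~\ref{rem: Ci}: since that partition consists of continuum-many countable dense monochromatic pieces and $\Front(T_i')$ already maps isomorphically to $(\Q,\varphi)$ by Proposition~\ref{prop: axil iso}, the branches $R^\omega$ form a dense ``completion-type'' set; but for the ordered sum it is cleaner to argue as follows. Using $\sum_{x \in \ol\Front(T_i')} C_i(x) = \sum_{x \in \Front(T_i') \sqcup R^\omega} C_i(x)$, I split off the branch contributions and observe that, by Proposition~\ref{prop: eq class density}, inside any interval of $\ol\Front(T_i')$ bounded by elements of $\Front(T_i')$ there are elements of every class $[r]$; combined with Proposition~\ref{prop: axil iso} this shows the colored order $(\ol\Front(T_i'), C_i)$ satisfies the hypotheses of (the colored version of) Skolem's uniqueness theorem~\ref{thm: cantor} with color set $\rng(C_i) = \ol{F_\plus} \cup \ol{F_\minus} \cup F_{i,0}$ —  wait, more carefully, $C_i$ is a dense coloring of the countable dense unbounded order $\Front(T_i')$ extended by the branches — so that $\sum_{x \in \ol\Front(T_i')} C_i(x) \cong \Xi\bigl((F_\plus \cup F_\minus) \cup \{A_{i,\omega}^{\ol r} \mid r \in R^\omega\}\bigr)$. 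Second, I would identify the right-hand side with $L_i$ by splitting: $\Xi$ of this color set decomposes (after separating the $F$-colored part, which contributes $\Xi(F_\plus \cup F_\minus) \cong L_i \setminus A_{i,\omega}$ by Corollary~\ref{cor: without Aiw}) from the branch-colored part, whose total contribution is $\bigsqcup_{r \in R^\omega} A_{i,\omega}^r = A_{i,\omega}$ by Proposition~\ref{prop: Aiw partitioned into intervals}(2); and these two parts are interleaved in $L_i$ exactly as the frontier-nodes and branches of $T_i'$ are interleaved, via the isomorphism $G_i$ of Proposition~\ref{prop: Front(T) is L} together with the order relation ``$A_{i,n}^\sigma \ll A_{i,\omega}^r$ when $\sigma \prec r$'' established above. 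So the cleanest route is: build the isomorphism $L_i \to \sum_{x \in \ol\Front(T_i')} C_i(x)$ piecewise, sending $G_i(\sigma)$-fibers to the $c_i(\sigma)$-summand for $\sigma \in \Front(T_i)$ (equivalently grouping leaves of $T_i$ under their parent in $\Front(T_i')$) and sending $A_{i,\omega}^r$ to the $C_i$-summand at the branch $r$, precomposed with $h_r$ to land in the chosen representative $A_{i,\omega}^{\ol r}$.

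**Main obstacle.** The delicate point is checking that this piecewise map is globally monotone — i.e.\ that the order between a fiber over a leaf $\sigma \in \Front(T_i)$ and a copy of $A_{i,\omega}^r$ matches the order between $\sigma$ and $r$ in $\ol\Front(T_i')$. This requires the inequality $A_{i,n}^\sigma \ll A_{i,\omega}^r$ for $\sigma \prec r$ (just proved) together with its mirror, and also the relation between the images of $G_i$ on leaves and the sets $A_{i,n}^\rho$ — namely that $G_i(\layer(T_i,n))$ restricted ``below $\rho \in R^n$'' lands in $A_{i,n}^\rho \setminus A_{i,n+1}$, which follows by unwinding the definition of $G_{i,n}$ via $f_{i,\rho}$ and Remark~\ref{rem: decreasing}. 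Once all these local comparisons are verified, bijectivity is immediate from Remark~\ref{rem: partition}, Proposition~\ref{prop: Aiw partitioned into intervals}, and the bijectivity established in the earlier propositions, and monotonicity is a finite case-check on the position of the two points relative to the tree $T_i'$. I expect the bookkeeping in this case-check — especially keeping the alternation of $i$ and $-i$ along branches straight — to be the most error-prone part, but conceptually routine given the remarks already assembled.
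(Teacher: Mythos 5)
Your final plan---the piecewise map sending each leaf-fiber of $G_i$ to the summand of its parent in $\Front(T_i')$ and each $A_{i,\omega}^r$ (via $h_r$) to the summand at the branch $r$, with bijectivity coming from Remark~\ref{rem: partition} and Proposition~\ref{prop: Aiw partitioned into intervals} and monotonicity reduced to the comparison $A_{i,n}^\sigma \ll A_{i,\omega}^r$ for $\sigma \prec r$ plus a case analysis on the position of the leaf's prefix relative to $r$---is exactly the paper's proof of this proposition. The Skolem-theorem detour you briefly entertain in the middle would not apply as stated (since $\ol\Front(T_i')$ is uncountable, Theorem~\ref{thm: cantor} cannot be invoked there; that identification is deferred to Corollary~\ref{cor: with Aiw}, which uses countability of $L_i$ to discard all but countably many branch colors), but you rightly abandon it in favor of the direct construction.
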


\begin{proof}
Let $ H_i : L_i \to \sum_{x \in \ol \Front(T_i')} C_i(x) $ be defined by

$$H_i(x) := \begin{cases}
        (\rho\lan q_0 \ran, h_{(-1)^{\abs{\rho} + 1} i}^{-1}(q_1)) & \text{ if } x \notin A_{i, \omega}, G_{i}^{-1}(x) = \rho \lan q_0, q_1\ran \text{ for some } \rho \in R^{< \omega}, q_0, q_1 \in \Q ; \\
        (\rho\lan q_0, \pm \infty \ran, h_{(-1)^{\abs{\rho} + 1} i}^{-1}(q_1)) & \text{ if } x \notin A_{i, \omega}, G_{i}^{-1}(x) = \rho \lan q_0, \pm \infty, q_1\ran \text{ for some } \rho \in R^{< \omega}, q_0, q_1 \in \Q ;\\
        (r, h_r(x))&\text{ if }x  \in A_{i, \omega}\text{ and } r \in R^\omega \text{ is unique such that } x \in A_{i, \omega}^r. 
    \end{cases} $$
The map $H_i$ is well-defined thanks to Proposition \ref{prop: Aiw partitioned into intervals} and Remark \ref{rem: leaves}.

We need to show that $ H_i $ is an order isomorphism. Firstly, recall from the proof of Corollary~\ref{cor: without Aiw} that $ H_i \res (L_i \setminus A_{i, \omega}) : L_i \setminus A_{i, \omega} \to \sum_{\sigma \in \Front(T_i')} C_i(\sigma) $ is an order isomorphism. Secondly, observe that Proposition~\ref{prop: Aiw partitioned into intervals} implies that $ H_i \res A_{i, \omega} : A_{i, \omega} \to \sum_{r \in R^\omega} C_i(r) $ is also an order isomorphism. {We will now show these two restrictions are {\emph{compatible}}, i.e., $ H_i(x) < H_i(y) $ for $ x \in L_i \setminus A_{i, \omega} $ and $ y \in A_{i, \omega} $ with $ x < y $. The proof when $x>y$ is dual, and is omitted.}

Since $x\in L_i\setminus A_{i,\omega}$, thanks to Remark \ref{rem: leaves}, $ G^{-1}_i(x) $ is of the form $ \rho \lan q_0, q_1 \ran $ for $\rho\in R^{<\omega}$ and $q_0\in \Q\setminus R$, or $\rho \lan q_0,\pm \infty, q_1 \ran $ for $\rho\lan q_0\ran\in R^{<\omega}$. Using the definition of $ G_i $, we conclude that $ x \in \rng(f_{i, \rho}) = A_{i, n}^\rho $, where $ n := \abs{\rho} $. Since $y\in A_{i,\omega}$,  there is a unique $ r \in R^{\omega} $ such that $ y \in A_{i, \omega}^r $, so that $ H_i(y) = (r, h_r(y)) $. 

We complete the proof by considering the following three possibilities.
    \begin{itemize}
        \item If $\rho \lan q_0 \ran \sqsubseteq r$, then  $G_i^{-1}(x) = \rho \lan q_0, \lambda , q_1 \ran, $  and $   H_i(x) = (\rho \lan q_0, \lambda \ran, h_{j}^{-1}(q_1)) $, for $ j := (-1)^{n + 1}i $ and some $ \lambda \in \{- \infty, \infty\}. $ Let $q' \in \Q$ be such that $ \rho \lan q_0, q' \ran \sqsubseteq r $. We must have $ \lambda = -\infty $, for otherwise by Remark \ref{rem: decreasing}, we would have $ y \in A_{i, \omega}^r \subseteq A_{i, n + 2}^{\rho\lan q_0, q' \ran} = \rng(f_{-i, \rho \lan q_0, q' \ran}) \subseteq \rng(f_{-i, \rho \lan q_0 \ran}) = f_{-i, \rho}(f_{j, q_0}(L_j)) \ll f_{-i, \rho}(\{q_0 \} \times \delta_j^0) \ni f_{-i, \rho}(q_0, h_{j}^{-1}(q_1)) = G_i(\rho \lan q_0, \infty, q_1 \ran) \ni x $, which is a contradiction.

        Since $-\infty< r(n+2)$, we have $ G_i^{-1}(x) = \rho\lan q_0, -\infty, q_1 \ran\prec r$, and hence $ H_i(x) <  H_i(y) $ as required.


        \item If $\rho \not\sqsubseteq r$, then there exist $\sigma \in R^{< \omega}$, $q', q'' \in R$ such that $\sigma \lan q' \ran \sqsubseteq r$ and $\sigma \lan q'' \ran \sqsubseteq \rho$. If $ q' < q'' $, then $ y \in A_{i,\omega}^r \subseteq A_{i, \abs{\sigma \lan q' \ran}}^{\sigma \lan q' \ran} = \rng(f_{-i,\sigma \lan q' \ran}) = f_{-i, \sigma}(f_{j, q'}(L_j)) \ll f_{-i, \sigma}(f_{j, q''}(L_j)) = \rng (f_{-i,\sigma \lan q'' \ran}) = A_{i, \abs{\sigma \lan q'' \ran}}^{\sigma \lan q'' \ran} \ni x$ for $j := (-1)^{\abs{\sigma}+1} i$, contradicting the assumption $ x < y $. Therefore, $ q'' < q' $, i.e., $ \rho \prec r $, and hence $ H_i(x) < H_i(y) $. 
        \item If $\rho \sqsubseteq r$ but $\rho \lan q_0 \ran \not\sqsubseteq r$, then there exists $q' \in R$ such that $\rho \lan q' \ran \sqsubseteq r$. Assume that $q_0 > q'$. Then, $ y \in A_{i,\omega}^r \subseteq A_{i, n+1}^{\rho \lan q' \ran} = \rng(f_{-i,\rho \lan q' \ran}) = f_{-i, \rho}(f_{j, q'}(L_j)) \ll f_{-i, \rho}(f_{j, q_0}(L_j)) = \rng (f_{-i,\rho \lan q_0 \ran}) = A_{i, n+1}^{\sigma \lan q_0 \ran} \ni x$ for $j := (-1)^{\abs{\rho}+1} i$, contradicting the assumption $ x < y $. Therefore, $ q_0 < q' $, i.e., $\rho \prec r$, and hence $ H_i(x) < H_i(y) $.
    \end{itemize}

\end{proof}

\begin{cor}
\label{cor: with Aiw}
 $ L_i \cong \Xi(\rng(C_i))$.
\end{cor}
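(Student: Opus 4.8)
The plan is to read the statement off from Proposition~\ref{prop: L_i as an ordered sum}, which already presents $L_i$ as the ordered sum $\sum_{x\in\ol\Front(T_i')}C_i(x)$. The key observation is that, although $\ol\Front(T_i')$ is uncountable, all but countably many of its points receive the colour $\mbf 0$: the leaves form a copy of $\Q$, while by Proposition~\ref{prop: Aiw partitioned into intervals} the sets $A_{i,\omega}^r$ ($r\in R^\omega$) are pairwise disjoint subsets of the countable set $A_{i,\omega}$, so only countably many are nonzero, hence only countably many (necessarily countable) $\sim$-classes carry a nonzero colour. The same remark shows $\rng(C_i)$ is a countable set of linear orders, each countable because it embeds into $L_\plus$ or $L_\minus$.

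Set $D:=\{x\in\ol\Front(T_i')\mid C_i(x)\ne\mbf 0\}$. By the above, $D$ is countably infinite: it contains all the leaves (whose colours lie in $F_\plus\cup F_\minus$, which omits $\mbf 0$) and exceeds $\front(T_i')$ by only countably many points. I would then check that $C_i\res D$ is a dense colouring of $D$ onto $\rng(C_i)\setminus\{\mbf 0\}$: surjectivity is immediate, and for density, given $x\prec y$ in $D$ and $L\in\rng(C_i)\setminus\{\mbf 0\}$, the $C_i$-preimage of $L$ is dense in $\ol\Front(T_i')$ --- this is Proposition~\ref{prop: axil iso} when $L\in F_\plus\cup F_\minus$, and Proposition~\ref{prop: eq class density} (the class $[r]$ being $C_i$-monochromatic of colour $L$) when $L=A_{i,\omega}^{\ol r}$ --- so some point of $D$ coloured $L$ lies strictly between $x$ and $y$; this is precisely the content of Remark~\ref{rem: Ci}. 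In particular $D$ is a dense linear order, and it has no endpoints, since below (resp.\ above) every point of $D$ --- be it a leaf, handled by Proposition~\ref{prop: axil iso}, or a branch $r=\lan r_0,r_1,\dots\ran$, handled by noting $\lan q\ran\prec r$ for any $q\in\Q\setminus R$ with $q<r_0$ --- there is a leaf. By Cantor's theorem, $D\cong\Q$.

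Finally, since a summand equal to $\mbf 0$ contributes nothing, $\sum_{x\in\ol\Front(T_i')}C_i(x)=\sum_{x\in D}C_i(x)$; transporting $C_i\res D$ along an isomorphism $D\cong\Q$ turns it into a dense colouring $\chi\colon\Q\to\rng(C_i)\setminus\{\mbf 0\}$ with $\sum_{q\in\Q}\chi(q)\cong\sum_{x\in D}C_i(x)$, so by Definition~\ref{defn: shuffle} this order is $\Xi(\rng(C_i)\setminus\{\mbf 0\})$, which equals $\Xi(\rng(C_i))$ by Remark~\ref{rem: zero not in S}. Combined with Proposition~\ref{prop: L_i as an ordered sum}, this yields $L_i\cong\Xi(\rng(C_i))$. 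The step I expect to need the most care is the density verification for $C_i\res D$ --- that the colours $A_{i,\omega}^{\ol r}$ freshly attached to the branches of $T_i'$ still occur densely, and that discarding the $\mbf 0$-points leaves a copy of $\Q$ rather than a bounded or Dedekind-completed variant --- which is exactly where Propositions~\ref{prop: axil iso} and~\ref{prop: eq class density} (packaged as Remark~\ref{rem: Ci}) carry the weight; the remainder is bookkeeping about where the colour $\mbf 0$ occurs.
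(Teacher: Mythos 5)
Your proposal is correct and follows essentially the same route as the paper: your set $D$ is exactly the paper's $Q_i := (\front(T_i') \sqcup R_i', \prec)$, and both arguments combine Proposition~\ref{prop: L_i as an ordered sum}, the countability of $L_i$ to discard all but countably many branch colours, and Remark~\ref{rem: Ci} to get a countable dense unbounded order with a dense colouring, whence the shuffle via Theorem~\ref{thm: cantor}. Your explicit handling of the colour $\mbf 0$ via Remark~\ref{rem: zero not in S} is a small touch of extra care that the paper leaves implicit.
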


\begin{proof}
Since $ L_i $ is countable, Proposition~\ref{prop: L_i as an ordered sum} implies that $ C_i(r) = \mbf 0 $ for all but countably many $ r \in R^\omega $. Hence, $ \rng(C_i) $ is countable. Let $ R'_{i} := \{r\in R^\omega\mid C_i(r)\neq\mbf0\}$ and $ Q_i := (\front(T_i') \sqcup R'_i, \prec) $.  Note that $ Q_i $ is a countable dense linear order without end points. Thanks to Remark~\ref{rem: Ci}, $C_i\res Q_i$ is a dense $\rng(C_i)$-coloring. Therefore, $ L_i \cong \sum_{x \in Q_i} C_i(x) \cong {\Xi(\rng(C_i))} $.
 \end{proof}

\begin{proof} (of Theorem \ref{thm: main theorem})
Recall from Remark \ref{rem: colour set bijection} that there is a bijection $\Gamma:\rng(C_+)\to\rng(C_-)$ satisfying $L\cong\Gamma(L)$ for each $L\in \rng(C_+)$. Combining this bijection with the isomorphism from the above corollary, we get  $ L_\plus \cong \Xi({\rng(C_\plus)}) \cong \Xi(\rng(C_\minus)) \cong  L_\minus $.
\end{proof}

\section{Future directions}

In this paper, we used countability arguments twice.

First, in Section~\ref{S: Constructing}, we used the countability of each linear order in {$ F_i $} to fix embeddings $ h_i $ that allowed us construct $ T_i $ as a subset of $ \ol\Q^{< \omega} $. This can be avoided: by first constructing $ T_i' $ (e.g., by removing condition $ (1) $ in Definition~\ref{defn: tree}), and then obtaining $ T_i $ by adding, below each $ \sigma \in \Front(T_i') $, a copy of the potentially uncountable $ c_i(\sigma) $ as the set of its children, with $ \prec $ being defined using the order relations on the $ c_i(\sigma) $'s, we can proceed without cardinality arguments as far as to obtain Proposition~\ref{prop: L_i as an ordered sum}.  

Second, in the proof of Corollary~\ref{cor: with Aiw}, we used the countability of $ L_i $  to conclude that $ C_i(r) = \mbf 0 $ for all but countably many $ r \in R^\omega $. This allowed us to easily obtain the isomorphism $ L_i = \sum_{x \in \ol\Front(T'_i)} C_i(x) \cong \Xi(\rng(C_i)) $ by invoking Theorem~\ref{thm: cantor}. However, in the general case, $ C_i(r) $ may well be nonempty for uncountably many $ r \in R^\omega $.
Nonetheless, the first author conjectures the following generalization of Theorem~\ref{thm: main theorem}.

\begin{conj}

    Let $ L_\plus, L_\minus $ be convex-biembeddable linear orders and suppose that there exist countable sets $ S_\plus^0, S_\minus^0 $ of linear orders such that $L_\plus \cong \Xi(S_\plus^0) $ and $ L_\minus \cong \Xi(S_\minus ^0) $.  Then $ L_\plus \cong L_\minus $.

\end{conj}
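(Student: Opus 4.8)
The plan is to carry out the two-step program sketched above. First I would eliminate the initial use of countability exactly as indicated: construct $T_i'$ directly from clauses $(2)$--$(4)$ of Definition~\ref{defn: tree} alone (so that its leaves are precisely the nodes colored in $F_\plus\cup F_\minus$), and then recover $T_i$ by attaching, below each $\sigma\in\Front(T_i')$, an order-isomorphic copy of the possibly uncountable linear order $c_i(\sigma)$ as the set of children of $\sigma$, extending $\prec$ by means of the orders on these copies. Nothing in Sections~\ref{S: Constructing}--\ref{S: Proving} prior to Corollary~\ref{cor: with Aiw} used the countability of the shufflands --- only the countability of $\ol\Q^{<\omega}$, which is unaffected --- so Propositions~\ref{prop: Front(T) is L}, \ref{prop: axil iso}, \ref{prop: L_i as an ordered sum}, Corollaries~\ref{prop: Front(T) is shuffle}, \ref{cor: without Aiw}, and the propositions of Section~\ref{subsection: final argument} survive with essentially verbatim proofs.

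This first part delivers, exactly as in the countable case: a coloring $C_i:\ol\Front(T_i')\to\rng(C_i)$ with $L_i\cong\sum_{x\in\ol\Front(T_i')}C_i(x)$; the decomposition of $\ol\Front(T_i')$ from Remark~\ref{rem: Ci} into countable dense $C_i$-monochromatic suborders; the $i$-independent isomorphism $L_i\setminus A_{i,\omega}\cong\Xi(F_\plus\cup F_\minus)$ (note $F_\plus\cup F_\minus$ is still countable, so $\Xi$ applies); and the bijection $\Gamma:\rng(C_\plus)\to\rng(C_\minus)$ with $L\cong\Gamma(L)$, which is the identity on $F_\plus\cup F_\minus$ and sends each branch color $A_{\plus,\omega}^{\ol r}$ to $A_{\minus,\omega}^{\ol{z(r)}}$, $z$ being the bijection of $\sim$-classes from Remark~\ref{rem: same colors}.

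The second, genuinely new part is to produce an order-isomorphism $\psi:\ol\Front(T_\plus')\to\ol\Front(T_\minus')$ with $C_\plus(x)\cong C_\minus(\psi(x))$ for every $x$; granting this, reindexing along $\psi$ and replacing each summand by an isomorphic copy yields $L_\plus\cong\sum_{x}C_\plus(x)\cong\sum_{y}C_\minus(y)\cong L_\minus$. I would build $\psi$ by a transfinite back-and-forth of length $\mathfrak c$ over the two continuum-sized orders $\ol\Front(T_\plus')$ and $\ol\Front(T_\minus')$. At a stage with a color-coherent partial isomorphism between subsets of size $<\mathfrak c$, to absorb a new point $a$ from either side one seeks, inside the open interval cut out by the image of $a$'s position, a point of isomorphic color: when $a$ is a leaf with color $L\in F_\plus\cup F_\minus$, the $L$-colored leaves are dense in $\ol\Front(T_\minus')$ (Proposition~\ref{prop: axil iso}); when $a$ is a branch in the class $[r]$, one uses $C_\plus(a)=A_{\plus,\omega}^{\ol r}\cong\Gamma(A_{\plus,\omega}^{\ol r})=A_{\minus,\omega}^{\ol{z(r)}}$ together with the density of the class $z([r])$ in $\ol\Front(T_\minus')$ (Proposition~\ref{prop: eq class density}), and symmetrically in the other direction via $z^{-1}$.

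The hard part will be the extension step itself: showing that every cut realized on one side by a point of a given color is realized on the other side by a point of an isomorphic color. In the countable setting this was precisely the content of Theorem~\ref{thm: cantor} (uniqueness of dense $\Q$-colorings) applied in Corollary~\ref{cor: with Aiw}; no such off-the-shelf statement is available here. The obstruction is that the bare order type of $\ol\Front(T_i')$ can a priori depend on the parameters $\alpha_i^0,\delta_i^0$ (through the optional $\pm\infty$-children of clauses $(3)$--$(4)$) and on the choice of $R$, so $\ol\Front(T_\plus')$ and $\ol\Front(T_\minus')$ need not be literally equal. I would try to handle this either by normalizing --- choosing $R$ order-isomorphic to $(\Q,<)$ and choosing the convex embeddings $f_i$ so as to equalize these parameters --- or by proving directly that the dense interleaving of $\front(T_i')$ with its branch classes renders both the bare order type and the $\Gamma$-twisted coloring of $\ol\Front(T_i')$ insensitive to them. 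Establishing this invariance, which is exactly the uncountable analogue of Theorem~\ref{thm: cantor} adapted to the structures at hand, is where essentially all the difficulty of the conjecture lies.
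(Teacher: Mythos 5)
This statement is left as an open conjecture in the paper --- the authors give no proof, and in Section~6 they themselves point to the exact obstruction you run into: the countability of $L_i$ is used only to invoke Theorem~\ref{thm: cantor} in Corollary~\ref{cor: with Aiw}, and no analogue is available when uncountably many branches carry nonzero colors. Your first part (rebuilding $T_i'$ without the embeddings $h_i$ and carrying the argument through Proposition~\ref{prop: L_i as an ordered sum}) is exactly the reduction the authors sketch, and it is fine. But your second part does not close the gap, and you candidly say so; as it stands the proposal is a plan with its central step missing, not a proof.

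The concrete problem is that a back-and-forth of length $\mathfrak{c}$ between $\ol\Front(T_\plus')$ and $\ol\Front(T_\minus')$ has no valid extension step. In the countable case, a \emph{finite} partial color-preserving isomorphism cuts out nonempty open intervals, and density of each color class finishes the job; this is the content of Theorem~\ref{thm: cantor}. Once the partial isomorphism is infinite, the point $a$ to be absorbed determines a cut over an infinite subset of the domain, and the corresponding cut on the other side carries its own cofinality and gap/realization data: the interval it determines may be empty, may be a gap where the source cut is filled, or may fail to contain any point of the required color since each monochromatic class is only countable while the cut is determined by up to $\mathfrak{c}$-many constraints. Density of the classes $[r]$ and of the leaf-color classes (Propositions~\ref{prop: axil iso} and \ref{prop: eq class density}) says nothing about such cuts. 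Indeed $\ol\Front(T_i')$ contains $R^\omega\cong\Q^\omega$ with the order $\prec$, a dense order of size continuum, and two such orders partitioned into continuum-many countable dense color classes with matching color multisets need not be isomorphic as colored orders; moreover the set $R_i'$ of branches with $C_i(r)\neq\mathbf 0$ can now be uncountable and its placement inside $R^\omega$ is an additional invariant your construction must match but does not control. Establishing the ``uncountable uniqueness of dense colorings'' you appeal to is, as you note, the whole difficulty of the conjecture, so the proposal leaves the statement open, just as the paper does.
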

\def\cont{\mathfrak{c}}

{The first author also believes that the results of this paper are applicable more generally.}

\begin{ques}

Let $ M $ be a countable model of an $ \aleph_0 $-categorical theory for a purely relational finite language. Using appropriate analogues of convex embeddings and shuffles with respect to suitable coloring functions, is it possible to obtain a CSB-style theorem using techniques similar to those developed in this paper? 


    
    
\end{ques}

\section*{Declarations}
\subsection*{Compliance with ethical standards}
Not applicable

\subsection*{Competing interests}
Not applicable

\subsection*{Funding}
The authors have no relevant financial or non-financial interests to disclose.

\subsection*{Availability of data and materials}
Not applicable

\bibliographystyle{alpha}
\bibliography{main}

\vspace{0.2in}
\noindent{}Corresponding Author: Suyash Srivastava\\
Department of Mathematics and Statistics\\
Indian Institute of Technology Kanpur\\
Uttar Pradesh, India\\
Email 1: \texttt{suyashsriv20@gmail.com}\\
Email 2: \texttt{ss3151@cam.ac.uk}\\
Phone 1: (+44) 74873 29586\\
Phone 2: (+91) 89296 70686\\


\end{document}